\newcommand{\R}{{\mathbb R}}
\newcommand{\N}{{\mathbb N}}
\renewcommand{\L}{\Lambda}
\DeclareMathOperator{\Dom}{Dom}
\DeclareMathOperator{\Lip}{Lip}
\DeclareMathOperator{\dist}{dist}
\def\titlerunning#1{\gdef\titrun{#1}}
\def\author#1{\gdef\autrun{\def\and{\unskip, }#1}\gdef\@author{#1}}
\def\address#1{{\def\and{\\}\renewcommand{\thefootnote}{}%
\footnote {#1}}%
\markboth{\autrun}{\titrun}}
\def\email#1{e-mail: #1}
\def\subjclass#1{{\renewcommand{\thefootnote}{}%
\footnote{\emph{Mathematics Subject Classification (2010):} #1}}}
\def\keywords#1{\par\medskip
\noindent\textbf{Keywords.} #1}
\theoremstyle{definition}
\newtheorem{theorem}{Theorem}[section]
\newtheorem{corollary}[theorem]{Corollary}
\newtheorem{lemma}[theorem]{Lemma}
\newtheorem{proposition}[theorem]{Proposition}
\newtheorem*{theorem*}{Theorem}
\theoremstyle{definition}
\newtheorem{definition}[theorem]{Definition}
\newtheorem{remark}[theorem]{Remark}
\newtheorem{example}[theorem]{Example}
\newtheorem*{hypothesisH1'}{Hypothesis $\mathbf{(H_2')}$}
\newtheorem*{hypothesisH1''}{Hypothesis ($\mathcal S^{\infty}_{x_*}$)}
\newtheorem*{hypothesisS'}{Hypothesis ($\mathcal S^{\infty}_{x_*}$)}
\newtheorem*{hypothesisH'}{Hypothesis ($\mathcal A^{\infty}_{x_*}$)}
\newtheorem*{problemP'}{Problem \textbf{(P$'$)}}
\newtheorem*{conditionS}{Condition (S)}
\newtheorem*{conditionD}{Condition (D)}
\newtheorem*{basicass}{Basic Assumptions}
\newtheorem*{structass}{Structure Assumptions}
\numberwithin{equation}{section}
\begin{document}
\titlerunning{Avoidance of the gap}
\title{Avoidance of the Lavrentiev gap for one-dimensional non autonomous functionals with state constraints}
\author{
 Carlo Mariconda
}
\maketitle
\date
\address{
Carlo Mariconda (corresponding author), ORCID $0000-0002-8215-9394$),  Universit\`a
degli Studi di Padova,  Dipartimento di Matematica  ``Tullio Levi-Civita'',   Via Trieste 63, 35121 Padova, Italy;  \email{carlo.mariconda@unipd.it}}
\subjclass{49, 49J45, 49N60}
\keywords{Regularity, Lipschitz, minimizing sequence, approximation, radial convexity, control, effective domain, extended valued, gap
}
\begin{abstract}
Let $F(y):=\displaystyle\int_t^TL(s, y(s), y'(s))\,ds$ be a positive functional (the ``energy''), unnecessarily autonomous,   defined on the space of Sobolev functions $W^{1,p}([t,T]; \R^n)$ ($p\ge 1$). We consider the problem of minimizing $F$ among the functions $y$ that possibly satisfy one, or both, end point conditions.
In many applications, where the lack of regularity or convexity or growth conditions does not ensure the existence of a minimizer of $F$, it is important to be able to approximate the value of the infimum of $F$ via a sequence of Lipschitz functions satisfying the given boundary conditions. Sometimes, even with some polynomial, coercive and convex Lagrangians in the velocity variable, thus ensuring the existence of a minimizer in the given Sobolev space, this is not  achievable: this fact is know as the Lavrentiev phenomenon.
The paper deals on the avoidance of the Lavrentiev phenomenon under  the validity  of a further given state constraint of the form $y(s)\in\mathcal S\subset\R^n$ for all $s\in [t,T]$. \\
Given $y\in W^{1,p}([t,T];\R^n)$ with $F(y)<+\infty$ we give a constructive recipe for building a sequence $(y_h)_h$ of Lipschitz reparametrizations of $y$, sharing with $y$ the same boundary condition(s), that converge in energy to $F(y)$. With respect to  previous literature on the subject,  we distinguish the case of (just) one end point condition from that of both,  enlarge the class of Lagrangians that satisfy the sufficient conditions and show that $(y_h)_h$ converge also in $W^{1,p}$ to $y$. Moreover, the results apply also to extended valued Lagrangians  whose effective domain is bounded.
The results gives new clues even when the Lagrangian is autonomous, i.e., of the form $L(s,y,y')=\L(y,y')$. The paper follows two recent papers \cite{MTrans, CM5} of the author on the subject.
\end{abstract}
\section{Introduction}
We consider here a one-dimensional, vectorial functional of the calculus of variations
\[F(y)=\int_t^TL(s, y(s), y'(s))\,ds\]
defined on the space of Sobolev functions $W^{1,p}(I, \R^n)$ on $I:=[t,T]$ with values in $\R^n$, for some $p\ge 1$. In the paper the {\em Lagrangian} $L(s,y,u)$ is Borel and is assumed to have values in $[0, +\infty[\cup\{+\infty\}$. Following the terminology of control theory we will refer to $s$ as to the {\em time} variable,  to $y$ as to the {\em state} variable and to $u$ as to the {\em velocity} variable.
Unless there is a minimizer, it may be desirable to approximate the infimum of the {\em energy} $F$ through  a sequence of values $F(y_h)$ along a sequence $(y_h)_h$ of Lipschitz functions, that possibly share some desired boundary values and constraints. Though the Lipschitz functions are dense in $W^{1,p}(I; \R^n)$, unless some a priori growth assumptions from above are satisfied, this approximation is not always possible; this fact is summarized saying that the Lavrentiev phenomenon occurs.
Of course, the gap does not occur if the minimizers exist and are Lipschitz: we refer the interested reader to   \cite{CVTrans, Clarke1993, DMF,  Cellina, CF1, MTLip, BM2}.
Conditions ensuring the non-occurrence of the phenomenon, other than Lipschitz regularity of the minimizers, were established by Alberti and Serra Cassano in \cite{ASC} in the {\em  autonomous case} (i.e., $L(s,y,u)=L(y,u)$) where they showed that, for the problem with {\em one end point} condition, the phenomenon does not occur under a suitable local boundedness condition. When the Lagrangian is non autonomous, the gap may occur (though being quite rare if $L$ is coercive, see \cite{Zasl}), even with innocent looking Lagrangians, like  in Manià's \cite{Mania} problem
\[\min F(y):=\int_0^1(y^3-s)^2(y')^6\,ds:\, y\in W^{1,1}(I),\, y(0)=0,\, y(1)=1.\]
Several results concerning the {\em non autonomous} case are present in the literature (see \cite{Loewen, TZ, Zas}), but require some additional regularity of the Lagrangian on the state variable, not present in the autonomous case: as Carlson shows in \cite{Carl} many do actually derive from Angell -- Cesari Property ($\mathcal D$) introduced in  \cite{CeAng}.
An extension of \cite[Theorem 2.4]{ASC} for  non autonomous Lagrangians of the form $L(s,y,u)=\L(s,y,u)\Psi(s,y)$, without requiring further regularity on the state or velocity variables of $\L$,  was recently provided by the author in \cite{CM5} where  it is pointed out that the conditions that ensure the non-occurrence of the gap for problems with just {\em one end point} condition may, even in the autonomous case, not be sufficient for those with {\em both} end  point conditions: the difficulty of preserving the  boundary condition was noticed  also in the multidimensional setting (see \cite{BMT, MT2020, MT2020Open}). In this situation \cite[Theorem 3.1]{CM5} introduces a new sufficient condition conjectured by Alberti, covering mostly the case of {\em real valued} Lagrangians or when the {\em effective domain} of $\L$ (i.e., the set where $\L$ is finite) contains an {\em unbounded} rectangle.

Another direction was followed by Cellina and his collaborators  Ferriero, Marchini, Treu, Zagatti in \cite{CTZ, CF1} for {\em non autonomous problems} and,  in \cite{CFM} with Marchini,  for Lagrangians of the form $L(s,y,u)=\L(y,u)\Psi(s,y)$ under an additional {\em convexity hypothesis} on $u\mapsto\L(y,u)$, continuity of $\L, \Psi$. Here, given $y\in W^{1,p}(I;\R^n)$ with $F(y)<+\infty$, the convexity assumption on $\L$ allows to build a sequence of Lipschitz {\em reparametrizations} $(y_h)_h$ of $y$ with the desired end point conditions $y_h(t)=y(t), y_h(T)=y(T)$ and  such that $\limsup_{h} F(y_h)\le F(y)$; in particular since $y_h(I)=y(I)$ for all $h$, the sequence preserves possible state constraints. This  reparametrizations technique was the key tool in \cite{MTrans} to establish 
the non  occurrence of the Lavrentiev phenomenon for the problem with two end point constraints followed in \cite[Corollary 5.7]{MTrans} for non autonomous Lagrangians $L(s,y,u)=\L(s,y,u)$ assuming,  in the real valued case:
\begin{itemize}
\item A {\em local Lipschitz condition} (named (S) by many authors) on $s\mapsto\L(s,y,u)$. Property (S) is known to be a sufficient condition  for the validity of the Du Bois-Reymond equation; we refer to \cite{Cesari} for the smooth case,  to \cite{Clarke1993} for the nonsmooth convex case under weak growth assumptions,  to \cite{BM1, BM2} by Bettiol and the author in the general case.
\item {\em Radial convexity} on $\L$ in the velocity variable, i.e., convexity of $0<r\mapsto \L(s,y, ru)$. The role of radial convexity in Lipschitz regularity was shown by the author jointly with  Treu in \cite{MTLip} for the autonomous case, with Bettiol in \cite{BM1, BM2, BM3} in the general case and for some optimal control problems.
\item A  suitable local boundedness conditions.
\item A {\em linear growth condition} from below of the form $\L(s,y,u)\ge \alpha|u|-d$, $\alpha>0$.
\end{itemize}
In the extended valued case, the same conclusion was obtained under the additional requirement that {\em the limit of $\L$ at the boundary of the domain is $+\infty$},  giving new light in the case where the effective domain of $\L$ is bounded.
The above Condition (S)  and the local boundedness condition are essential to establish the non-occurrence of the gap.
The emphasis in \cite{MTrans} was given to establish not only the non-occurrence of the gap, but even the existence of  {\em equi}-Lipschitz  minimizing sequences, with some {\em uniformity} in the initial time and datum,  assuming a very mild growth condition from below introduced by Clarke in \cite{Clarke1993}. In view of the results of \cite{CM5} some questions concerning the results of \cite{MTrans} arise:
\begin{enumerate}
\item May one extend the results of \cite{MTrans} to the class of Lagrangians of the form $\L(s,y,u)\Psi(s,y)$ considered in \cite{CM5}, with $\L$ alone satisfying (S)? Notice that this class il strictly larger than the one of functions $\L(s,y,u)$ satisfying (S). For instance $L(s,y,u):=\L(s,y,u)\Psi(s,y)$ with
\[\forall s\in [0,1], \forall y,u\in\R\qquad \L(s,y,u):=u^2,\, \Psi(s,y)=\sqrt s\]
does not satisfy (S), but $\L$ does.
\item Which of the conditions formulated in \cite[Corollary 5.7]{MTrans} are really needed for the {\em one end point} condition problem?
\item Can the reparametrization argument work used in the proof of \cite[Theorem 5.1]{MTrans} be adapted for the more general class of Lagrangians considered in \cite{CM5}? Is radial convexity w.r.t. the velocity variable sufficient for the purpose?
\item Can one get rid of the growth assumptions from below needed in \cite{MTrans}?
\end{enumerate}
All of the above questions have an answer in Theorem~\ref{thm:Lav2}  and Corollary~\ref{coro:Lav3}; in particular the answer to Questions 1, 2 and 3 are positive. Moreover, with respect to \cite{MTrans},  we weaken a local boundedness condition in the spirit of \cite[Proposition 3.15]{BM4}, and show that a linear growth from below (see \S~\ref{sect:linear}) is just a desirable, though unnecessary option.
Moreover, it turns out that the limit condition on $\L$ at the boundary of the domain is not needed for the one end point condition problem.
A new fact with respect to the previous literature \cite{CFM, CF1, MTrans} based on the reparametrization method is, given $y\in W^{1,p}(I;\R^n)$, the convergence of the built Lipschitz approximating sequences $(y_h)_h$ not only in energy, but also in $W^{1,p}$ norm, to $y$. The method  is   constructive: in Example~\ref{ex:alberti}  we show how to build an explicit suitable Lipschitz approximating sequence following the recipe of the proof of Theorem~\ref{thm:Lav2}.
A discussion on the choice of alternative distance-like functions, other than the Euclidean one,  is outlined in \S~\ref{sect:Distance} in order to include a more ample class of  extended valued Lagrangians.

\section{Notation and Basic assumptions}
\subsection{Basic Assumptions}
Let $p\ge 1$. The functional $F$ (sometimes referred as to the ``energy'')  is defined by
\[\forall y\in W^{1,p}(I,\R^n)\qquad F(y):=\int_IL(s, y(s), y'(s))\, ds,\]
where $L(s,y,v)$ is of the form $L(s,y,v)=\L(s, y, v)\Psi(s,y)$.
\begin{basicass}
We assume  the following conditions.
\begin{itemize}
\item $I=[t, T]$ is a closed, bounded  interval of $\R$;
\item $\L:I\times \R^n\times\R^n\to [0, +\infty[\cup\{+\infty\},\, (s,y,u )\mapsto \L(s,y,u )$ ($n\ge 1$) is Borel measurable;
\item $\Psi:I\times\R^n\to [0, +\infty]$ is  Borel.
\item    The  {\bf effective domain} of $\L$, given by
   \[\Dom (\L):=\{(s,y,u)\in I\times\R^n\times\R^n:\, \L(s,y,u )<+\infty\}\]
   is of the form $\Dom(\L)=I\times D_{\L}$, with $D_{\L}\subseteq\R^n\times\R^n$.
\end{itemize}
\end{basicass}
\subsection{Notation}
We introduce the main recurring notation:
\begin{itemize}
\item The Euclidean norm of  $x\in \R^n$ is denoted by $|x|$;
\item The Lebesgue measure of a subset $A$ of $I=[t,T]$ is $|A|$ (no confusion may occur with the Euclidean norm);
\item If $y:I\to\R^n$ we denote by $y(I)$ its image, by $\|y\|_{\infty}$ its sup-norm and by $\|y\|_p$ its norm in $L^p(I;\R^n)$;
\item The complement of a set $A$ in $\R^n$ is denoted by $A^c$;
\item The characteristic function of a set $A$ is $\chi_A$.
\item If $x\in \R$, we denote by $x^+$ its positive part, by $x^-$ its negative part;
\item  $\Lip(I;\R^n)=\{y:I\to\R^n,\, y\text{ Lipschitz}\}$;  if $n=1$ we simply write $\Lip(I)$;
\item For $p\ge 1$, $W^{1,p}(I;\R^n)=\{y:I\to\R^n:\, y, y'\in L^p(I;\R^n)\}$; if $n=1$ we simply write $W^{1,p}(I)$.
\end{itemize}
\subsection{Two variational problems}
We shall consider  different variational problems associated to the functional $F$, with different end-point conditions and/or state constraints.
Let $X, Y\in \R^n$ and $\mathcal S\subset\R^n$. We define
\begin{itemize}
\item If $X\in\mathcal S$ we set $\Gamma_X^{\mathcal S}:=\{y\in W^{1,p}(I,\R^n):\, y(t)=X,\, y(I)\subset\mathcal S\}$, and the corresponding variational problem
    \begin{equation}\tag{$\mathcal P_X^{\mathcal S}$}\text{Minimize }\{F(y):\, y\in \Gamma_X^{\mathcal S}\},\end{equation}
    whenever $\inf (\mathcal P_{X}^{\mathcal S})<+\infty$.
\item If $X, Y\in\mathcal S$ we set \[\Gamma_{X, Y}^{\mathcal S}:=\{y\in W^{1,p}(I,\R^n):\, y(t)=X,\,y(T)=Y,\, y(I)\subset\mathcal S\},\] and the corresponding variational problem
\begin{equation}\tag{$\mathcal P_{X,Y}^{\mathcal S}$}\text{Minimize }\{F(y):\, y\in \Gamma_{X,Y}^{\mathcal S}\}.\end{equation}
whenever $\inf (\mathcal P_{X,Y}^{\mathcal S})<+\infty$.
\end{itemize}
For the problems with one end point constraint,  there is no privilege in considering the initial condition $y(t)=X$ instead of the final one $y(T)=Y$: any result obtained here for the above variational problems can be reformulated for  a final end point prescribed variational problems, with the same set of assumptions.
\subsection{Lavrentiev gap at a function and Lavrentiev phenomenon}
In this paper we consider different boundary data for the same integral functional.
\begin{definition}[\textbf{Lavrentiev gap at $y\in W^{1,p}(I;\R^n)$}]\label{defi:Lav}
Let $y\in W^{1,p}(I;\R^n)$ be such that $F(y)<+\infty$ and let $\Gamma\in \{\Gamma_X^{\mathcal S}, \Gamma_{X, Y}^{\mathcal S}\}$.\\ We say that the \textbf{Lavrentiev gap} does not occur at $y$ for the variational problem corresponding to $\Gamma$ if
there exists a sequence $\left(y_h\right)_{h\in\N}$ of functions in $\Lip (I, \R^n)$   satisfying:
 \begin{enumerate}
 \item $\forall h\in\mathbb N\quad y_h\in\Gamma$;
 \item $\displaystyle\limsup_{h\to +\infty}F(y_h)\le F(y)$;
 \item $y_h\to y$ in $W^{1,p}(I;\R^n)$.
\end{enumerate}
We say that the \textbf{Lavrentiev phenomenon} does not occur for the variational problem corresponding to $\Gamma$ if
\begin{equation}\label{tag:Lavrentievdefi}
\inf_{\substack{y\in W^{1,p}(I; \R^n)\\y\in \Gamma}}F(y)=\inf_{\substack{
y\in \Lip(I; \R^n)\\
y\in \Gamma}}F(y).
\end{equation}
\end{definition}
\begin{remark}[Gap and phenomenon] \phantom{AA}
\begin{itemize}
\item Condition 2 in Definition~\ref{defi:Lav} is less restrictive than the one that is usually considered, i.e., that $\displaystyle\lim_{h\to +\infty}F(y_h)= F(y)$. We believe that this one here is more appropriate to describe the Lavrentiev gap.
\item Let $y\in W^{1,p}(I;\R^n)$. The non-occurrence of the phenomenon at $y$ ensures that, given $\varepsilon>0$, there is a {\em Lipschitz} function $\overline y$ satisfying the same boundary data and/or constraints such that $F(\overline y)\le F(y)+\varepsilon$. If $L(s,y,\cdot)$ is convex for all $(s,y)\in I\times\R^n$, and $L(s, \cdot, \cdot)$ is lower semicontinuous, the non-occurrence of the Lavrentiev gap at $y$ implies the convergence of $(y_h)_h$ to $y$ in {\em energy}, i.e.,
    \[\lim_{h\to +\infty}F(y_h)=F(y):\] Indeed in that case  $F$ is weakly lower semicontinuous.
\item Of course, the non-occurrence of the Lavrentiev gap  along a minimizing sequence implies the non-occurrence of the Lavrentiev phenomenon for the same variational problem.
\end{itemize}
\end{remark}
The following celebrated example motivates the need to distinguish problems with just one end point condition from problems with   both end points conditions.
\begin{example}[Manià's example \cite{Mania}]\label{ex:Mania1} Consider the problem of minimizing
\begin{equation}\tag{$\mathcal P_{0,1}$}F(y)=\int_0^1(y^3-s)^2(y')^6\,ds:\, y\in W^{1,1}(I),\, y(0)=0,\, y(1)=1.\end{equation}
Then $y(s):=s^{1/3}$ is  a minimizer and $F(y)=0$. Not only $y$ is not Lipschitz; it turns out (see \cite[\S 4.3]{GBH}) that the Lavrentiev phenomenon occurs, i.e.,
\[0=\min F=F(y)<\inf\{F(y):\, y\in \Lip([0,1]),\, y(0)=0, y(1)=1\}.
\]
However, as it is noticed in \cite{GBH}, the situation changes drastically if one allows to vary the initial boundary condition along the sequence $(y_h)_h$. Indeed it turns out that the sequence $(y_h)_h$, where each $y_h$ is obtained by truncating $y$ at $1/h$, $h\in\mathbb N_{\ge 1}$, as follows:
\[y_h(s):=\begin{cases}{1}/{h^{1/3}}&\text{ if } s\in [0, 1/h],\\
s^{1/3}&\text{ otherwise}, \end{cases}\]
\begin{figure}[h!]
\begin{center}
\includegraphics[width=0.4\textwidth]{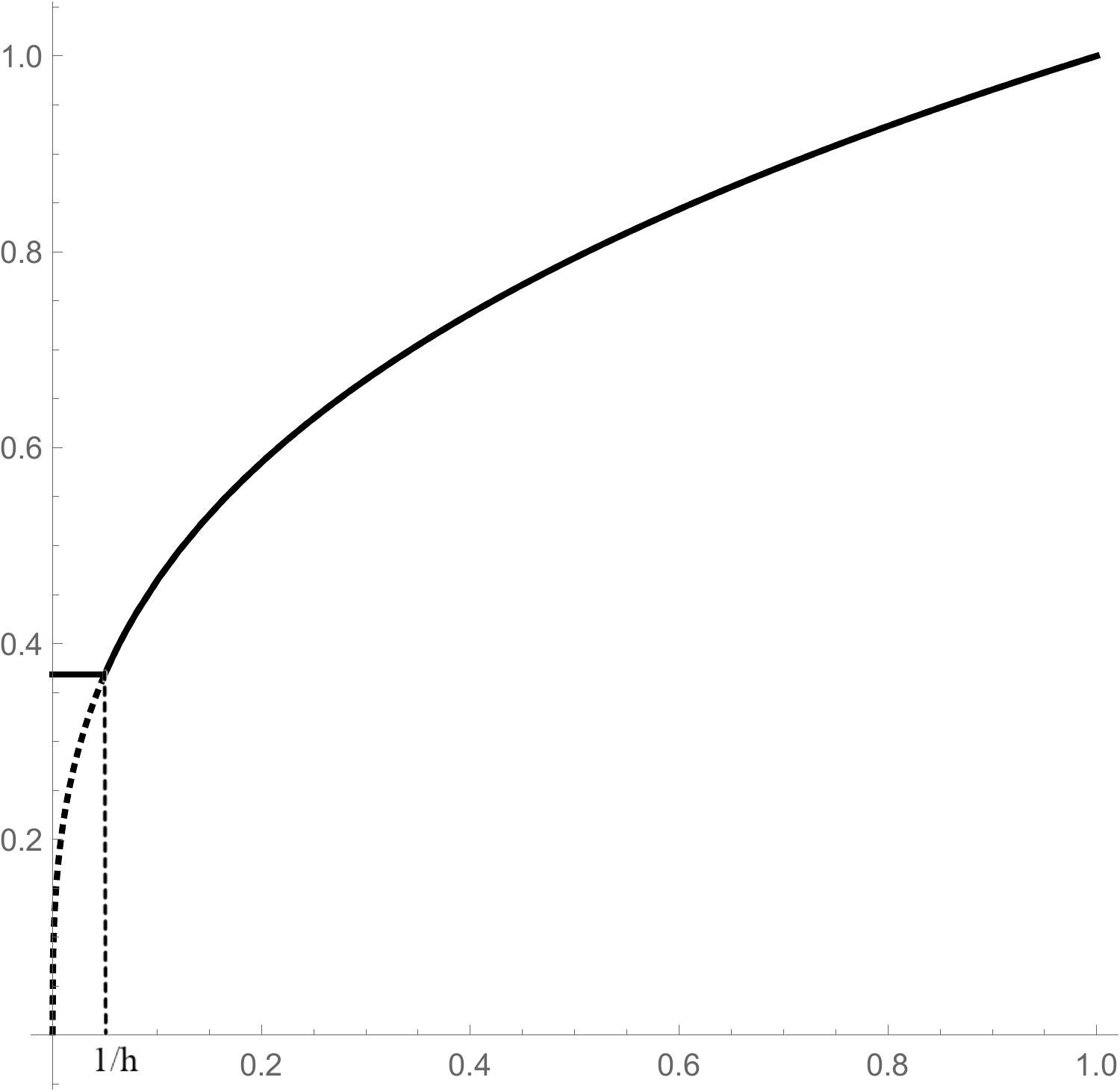}
\caption{{\small The function $y_h$.}}
\end{center}
\end{figure}
is a sequence of Lipschitz functions satisfying
\[y_h(1)=y(1)=1,\quad F(y_h)\to F(y),\quad y_h\to y \text{ in }W^{1,1}([0,1]).\]
Therefore, no Lavrentiev phenomenon occurs for the variational problem
\begin{equation}\min F(y)=\int_0^1(y^3-s)^2(y')^6\,ds:\, y\in W^{1,1}(I),\, \, y(1)=1.\end{equation}
\end{example}
\subsection{Condition (S)}
We consider the following local Lipschitz condition (S) on the first variable of $\L$.
\begin{conditionS}
For every  $K\ge 0$ of $\R^n$ there are $\kappa,  \beta\ge 0, \gamma\in L^1[t,T]$,
$\varepsilon_*>0$ satisfying,
for a.e.  $s\in I$
 \begin{equation}\label{tag:H3}
|\L(s_2,z,v)-\L(s_1,z,v)|\le \big(\kappa \L(s,z,v) +\beta|v|^p+\gamma(s)\big)\,|s_2-s_1|
\end{equation}
whenever $s_1,s_2\in [s-\varepsilon_*,s+\varepsilon_*]\cap I$, $z\in B_K$, $v \in\R^n$, $(s,z,v) \in\Dom(\L)$.
\end{conditionS}
\begin{remark} Condition (S) is fulfilled if $\L=\L(y,u)$ is autonomous. In the smooth setting, Condition (S) ensures the validity of the Erdmann - Du Bois-Reymond (EDBR) condition. In this more general framework it plays a key role in Lipschitz regularity under slow growth conditions \cite{Clarke1993, BM2, MTrans}  and ensures the validity of the (EDBR) for real valued Lagrangians \cite{BM1, BM2}.
\end{remark}
\subsection{Structure assumptions}
We require here some additional Structure Assumptions on $\L$.
\begin{structass}
\begin{itemize}\phantom{AA}
\item[$\bullet$] {(\textbf{Geometry of the effective domain of $\L$})} For every $z\in \R^n$ the set $\{v\in\R^n:\,(z,v)\in D_{\Lambda}\}$   is {strictly star-shaped on the variable $u$ w.r.t.  the origin}, i.e.,
\begin{equation}(z, v)\in D_{\Lambda}, \, 0<r\le 1\Rightarrow (z, rv)\in D_{\Lambda}.\label{tag:starshaped}\end{equation}
\item[$\bullet$] {(\textbf{Radial convexity of $\L$ in the velocity variable})} \text{For a.e. }$s\in I$, for all $(z, v)\in\R^n\times\R^n$,
\begin{equation}\label{tag:R}\tag{A$_c$} 0<r\mapsto\L(s,z,r{v})\text{ is convex}.\end{equation}
\end{itemize}
\end{structass}
\begin{remark}
Assumption (A$c$) implies that, for every $(s,z,v)\in \Dom(\L)$ there is a \textbf{convex subdifferential} for $0<r\mapsto\L(s,z, rv)$ at $r=1$, namely a real number $Q(s,z,v)$ such that
\[\forall r>0\qquad \L(s,z, rv)-\L(s,z,v)\ge Q(s,z,v)(r-1).\]
We shall denote by $\partial_r\L(s,z,rv)_{r=1}$ the set of these subdifferentials, i.e., the \textbf{convex subgradient} of $0<r\mapsto\L(s,z, rv)$ at $r=1$.
It is easy to realize (see, for instance, \cite{VinterBook, BM1}) that Assumption (A$_c$) is equivalent, at every $(s,z,v)\in \Dom(\L)$, of the convexity of the map
\[0<\mu\mapsto \L\left(s, z, \dfrac v{\mu}\right)\mu.\]
In this case, if $P(s,z,v)\in \partial_{\mu}\left[\L\left(s,z,\dfrac v{\mu}\right)\mu\right]_{\mu=1}$, we have
\begin{equation}\label{tag:subinequality}\forall\mu>0\quad \L\left(s, z, \dfrac v{\mu}\right)\mu-\L(s,z,v)\ge P(s,z,v)(\mu-1).\end{equation}
Notice that in the smooth case,
\[P(s,z,v)=\L(s,z,v)-v\cdot \nabla_v\L(s,z,v).\]
\end{remark}
\subsection{Distance-like functions and the compatibility condition (D)}\label{sect:Distance}
In all the paper one can replace a distance-like function with the Euclidean distance $\dist_e$ and Condition (D) with the requirement that $\Dom(\L)$ is open in $I\times\R^n\times\R^n$. It might be convenient, however, to consider other functions than $\dist_e$.
 A distance-like function is a positive function that behaves like the Euclidean distance on  pairs of $I\times\R^n\times\R^n$ having the same two first components.
\begin{definition}[\textbf{Distance-like function $\dist$}] \label{def:dist}Let
$\mathcal W$ be the subset of pairs of elements of $I\times\R^n\times\R^n$ whose two first components coincides. A \textbf{distance-like function} is a function $\dist(\cdot, \cdot)$  with values in $[0, +\infty]$, defined on a suitable symmetric subset of pairs of $I\times\R^n\times\R^n$ containing $\mathcal W$, that coincides with the Euclidean distance on $\mathcal W$, i.e., for all $(s,z)\in I\times\R^n$ and  $v_1, v_2\in\R^n$,
\[\dist((s,z, v_1), (s,z, v_2))= |v_2-v_1|.\]
For all  $(s,z,v)\in I\times \R^n\times\R^n$  and $A\subset I\times \R^n\times\R^n$  we set
\[\dist((s,z,v),A):=
\inf\{\dist((s,z,v), \omega):\, \omega\in  A\}.\]
\end{definition}
Here are some examples of distance-like functions.
\begin{example}[$u$-distance,  Euclidean distance and infinity-distance]\label{ex:distlike}\phantom{AA}
\begin{itemize}
\item We shall denote by $\dist_e$ the usual \textbf{Euclidean distance} in $I\times \R^n\times\R^n$.
\item
$\dist_u$ is the function defined on the pairs of points $\omega_i=(s, z, v_i)\in I\times \R^n\times\R^n, i=1,2$ with {\em the same first two components} by
\[
\dist_{u}(\omega_1, \omega_2)=|v_2-v_1|.\]
\end{itemize}
\end{example}
\begin{remark}\label{rem:triangular} A distance-like function is not necessarily a distance on $I\times\R^n\times\R^n$. For instance   $\dist_u$  is not a distance.
\end{remark}
%
\begin{definition}[Well-inside $\Dom(\L)$ for $\dist$]
 We say that a subset $A$ of $\Dom(\L)$ is
\textbf{well-inside $\Dom(\L)$} w.r.t. a distance-like function $\dist$ if it is contained in  $\{(s,y,u)\in\Dom(\L):\, \dist((s,y,u),\,\Dom(\L)^c)\ge \rho\}$, for a suitable $\rho>0$.
\end{definition}
\begin{example}
Let us examine the property that a set $A$ is  well-inside $\Dom(\L)$ for the distance-like functions $\dist_{\chi}, \chi\in\{e,u\}$ introduced in Example~\ref{ex:distlike}.
\begin{itemize}
\item If $\chi=e$ the above means that for all $(s,y,u)\in A$, the open ball
 of radius $\rho$ in $I\times \R^n\times \R^n$ and center in $(s,y,u)$ is contained in $\Dom(\L)$;
\item If   $\chi=u$ the above means that
\[(s,y,u)\in A,\, 0<r<\rho\Rightarrow (s,y,u+r u)\in \Dom(\L).\]
\item If $\chi=\infty$, {\em any subset} of   $\Dom(\L)$ (even $\Dom(\L)$ itself) is well-inside $\Dom(\L)$.
\end{itemize}
Notice that, if $\omega:=(s,y,u)\in\Dom(\L)$ then
\begin{equation}\label{tag:ineq}\dist_e(\omega, (\Dom(L))^c)\le \dist_{u}(\omega, (\Dom(L))^c).\end{equation}
Thus, if $\mathcal M_{\dist_{\chi}}$  is the class of sets that are well inside $\Dom(\L)$ w.r.t. $\dist_{\chi}$ ($\chi\in\{e,u\}$), we have
\begin{equation}\label{tag:inclusions}
\mathcal M_{\dist_e}\subset \mathcal M_{\dist_u}.
\end{equation}
\end{example}
\begin{example} The inclusions \eqref{tag:inclusions} are strict, in general.
 Let $\L$ be autonomous and $\Dom(\L)=\{(y,u)\in\R^2:\, |y|\le 1\}$. Then the set $\{(y,u)\in\R^2:\, |y|\le 1, |u|\le 1\}$ is well-inside $\Dom(\L)$ w.r.t. to $\dist_u$ but not w.r.t. $\dist_e$.
\end{example}
\begin{example}
Let in $I\times\R\times \R$ $A=I\times (B_1\cup \{0\}\times [0, 2])$. Then, for all $s\in I$,
\[\dist_e((s,0,1), A^c)=0, \quad \dist_u((s,0,1), A^c)=1.\]
\end{example}
We shall impose the following compatibility condition with the effective domain of $\L$.
\begin{conditionD}
A distance-like function $\dist$ satisfies (D) if, for all $(s,z,v)\in\Dom(\L)$ and $\varepsilon>0$ there exists $|v'|\le |v|$ with
\begin{equation}\label{tag:D} (s,z, v')\in\Dom(\L),\quad  \dist((s,z,v'), (\Dom(\L)^c))>0.\end{equation}
\end{conditionD}
\begin{remark} Of course, Condition (D) is satisfied if
\begin{equation}\label{tag:openD}(s,z, v)\in\Dom(\L)\Rightarrow  \dist((s,z,v), (\Dom(\L)^c))>0.\end{equation}
\end{remark}
\begin{example}
Let us consider Condition (D) for the distance-like functions introduced in Example~\ref{ex:distlike}.
\begin{itemize}
\item Regarding the  Euclidean distance $\dist_e$, Condition (D)  is satisfied if $\Dom(\L)$ is open in $I\times\R^n\times\R^n$;
\item If  $\dist=\dist_u$ defined above, Condition (D) is satisfied  whenever for all  $(s,z,v)\in\Dom(\L)$ there are  $|v'|\le |v|$  and  $\rho>0$ such that  $|v'-v''|\ge \rho>0$ for every $(s,z,v'')\in \Dom(\L)^c$. Indeed in this case \[\dist_u((s,z, v'), (\Dom(\L)^c)=\inf\{|v''-v'|:\, (s,z,v'')\in \Dom(\L)^c\}\ge \rho.\] If $n=1$, taking into account the fact that $\Dom(\L)$ is  star-shaped in the last variable,  it is enough to check that $\dist((s, z, 0), (\Dom(\L))^c)>0$ whenever $(s,z,0)\in\Dom(\L)$. Indeed if $(s,z,v)\in \Dom(\L)$ and $v\not=0$ then $(s,z,v')\in\Dom(\L)$ and $\dist_u((s,z,v'), \Dom(\L)^c)\ge\min\{|v-v'|, |v'|\}>0$ for all $v'$ in the relative interior of the segment joining  0 to $v$.
\end{itemize}
\end{example}
\subsection{A useful option: linear growth from below for $\L$}\label{sect:linear}
The following additional linear growth from below on $\L$ is not assumed in the main results; however its validity allows to weaken some of the hypotheses of Corollary~\ref{coro:Lav3} below.
\begin{itemize}
\item  There are $\alpha>0$ and $d\ge 0$ satisfying, for a.e. $s\in[0,T]$ and every $z\in \R^n , v\in \R^n$,
\begin{equation}\label{tag:lingrowth}\tag{{\rm G}$_{\L}$}\L(s,z,{v})\ge \alpha|{v}|-d.\end{equation}
\end{itemize}
\begin{lemma}\cite{CM5}\label{lemma:linbelow}
Let $y\in W^{1,p}(I; \R^n)$ be such that $F(y)<+\infty$. Assume  that $\L$ fulfills ({\rm G}$_{\L}$) and that the infimum of $\Psi$ along the graph of $y$ is strictly positive, i.e.,
\begin{itemize}
\item[(${\rm P}_{y,\Psi}$)] There is $m_{y, \Psi}>0$ such that $\Psi(s, z)\ge m_{y, \Psi}$ for all $s\in I, z\in y(I)$.
\end{itemize}
Then
\[\|y\|_1\le \dfrac{F(y)+\displaystyle m_{y,\Psi}d(T-t)}
{\displaystyle m_{y,\Psi}\alpha}.\]
\end{lemma}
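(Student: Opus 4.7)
The plan is to argue by a direct one-line chain of inequalities, exploiting the factorised structure $L(s,y,u)=\L(s,y,u)\Psi(s,y)$ together with the two given lower bounds (one on $\L$, one on $\Psi$ along the graph of $y$).

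First, I would start from the very definition of $F$ and use the fact that, since $y(s)\in y(I)$ for every $s\in I$, hypothesis $(\mathrm{P}_{y,\Psi})$ gives $\Psi(s,y(s))\ge m_{y,\Psi}>0$ almost everywhere on $I$. Therefore
\[
F(y)=\int_I \L(s,y(s),y'(s))\,\Psi(s,y(s))\,ds\;\ge\; m_{y,\Psi}\int_I \L(s,y(s),y'(s))\,ds.
\]
Next, I would plug in the linear growth from below $(\mathrm{G}_{\L})$, namely $\L(s,z,v)\ge\alpha|v|-d$, applied pointwise with $z=y(s)$, $v=y'(s)$, to obtain
\[
\int_I \L(s,y(s),y'(s))\,ds\;\ge\;\alpha\int_I |y'(s)|\,ds - d\,(T-t)\;=\;\alpha\|y'\|_1 - d(T-t).
\]

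Combining the two displays yields
\[
F(y)\;\ge\; m_{y,\Psi}\bigl(\alpha\|y'\|_1 - d(T-t)\bigr),
\]
and since the right–hand side makes sense (the hypothesis $F(y)<+\infty$ and $m_{y,\Psi}\alpha>0$ guarantee that the rearrangement is meaningful), solving for $\|y'\|_1$ gives exactly the stated bound
\[
\|y'\|_1\;\le\;\frac{F(y)+m_{y,\Psi}d(T-t)}{m_{y,\Psi}\alpha}.
\]

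There is essentially no obstacle here: the whole argument is a two-step pointwise majorisation followed by integration and algebraic rearrangement. The only thing worth double-checking is measurability (so that the pointwise lower bounds integrate correctly), which is immediate since $\L$ and $\Psi$ are Borel and $y,y'$ are measurable; and the positivity of $m_{y,\Psi}\alpha$, which is guaranteed by the assumptions. I would note in passing that the stated bound on $\|y\|_1$ is really a bound on $\|y'\|_1$ (with the bound on $\|y\|_1$ then following, if desired, by adding $|y(t)|(T-t)$ using $y(s)=y(t)+\int_t^s y'(\tau)\,d\tau$).
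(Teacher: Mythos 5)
Your proof is correct and is the direct two-step argument (bound $\Psi$ below by $m_{y,\Psi}$ on the graph, then apply $(\mathrm{G}_{\L})$ pointwise and rearrange); the paper itself does not reprove this lemma but imports it from \cite{CM5}, and your chain of inequalities is exactly the intended one. Your closing remark is also right: the paper consistently uses $\|y\|_1$ to mean $\int_I|y'(s)|\,ds$ (as is clear from the estimates $\nu|S_\nu|\le\|y\|_1$ and $\|y_j\|_\infty\le|X|+\|y_j\|_1$ later on), so the bound you derived on $\|y'\|_{L^1}$ is precisely the asserted conclusion.
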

%
\section{Non-occurrence of the Lavrentiev gap/phenomenon for ($\mathcal P_X^{\mathcal S}$) and ($\mathcal P_{X,Y}^{\mathcal S}$)}\label{sect:LavS}
We assume that the infima of both problem ($\mathcal P^{\mathcal S}_X$) and ($\mathcal P^{\mathcal S}_{X, Y}$) is finite, i.e., that each of them has at least an admissible trajectory.
\subsection{Non-occurrence of the Lavrentiev gap}\label{sect:Thm2}
The results of this section make use of the following notion of limit.
\begin{definition} Let $\dist(\cdot, \cdot)$ be a distance-like function and $G\subset I\times \R^n$. We write  that
\begin{equation}\label{tag:limitinfinito}\displaystyle\lim_{\dist((s,z,v),(\Dom (\L))^c)\to 0^+}\L(s,z,v)=+\infty
\text{ unif. w.r.t. }(s, z)\in G, \end{equation}
if
 for all  $r>0$ there exists $\rho>0$ such that, for all $(s,z,v)\in\Dom(\L)$,
  \begin{equation}\label{tag:infinito}\dist((s,z,v),\,(\Dom (\L))^c)\le \rho,\, (s,z)\in G\Rightarrow \L(s,z,v)\ge r.\end{equation}
\end{definition}
\begin{theorem}[\textbf{Non-occurrence of the Lavrentiev  gap for {\rm (}$P_{X}^{\mathcal S}${\rm)} and  {\rm (}$P_{X,Y}^{\mathcal S}${\rm )} at $y\in W^{1,p}(I;\R^n)$}]\label{thm:Lav2} Let $y\in W^{1,p}(I;\R^n)$ be such that
\[\L(s, y(s), y'(s))\Psi(s, y(s))\in L^1(I),\]
and let $B\ge F(y)$. Let $\dist(\cdot, \cdot)$ be a distance-like function satisfying (D).
In addition to the Basic Assumptions and the Structure Assumptions on $\L$ suppose:
\begin{itemize}
  \item[(${\rm B}_{y, \Psi}$)]  $\Psi$ is bounded on $I\times y(I)$;
  \item[(${\rm C}_{y, \Psi}$)] $\Psi(\cdot, z)$ is continuous for every $z\in y(I)$;
\item[(${\rm B}^{w}_{y,\Lambda}$)] There is   $\nu_0>0$ such that $\L$ is bounded on $(I\times y(I)\times B_{\nu_0})\cap \Dom(\L)$;
\item[(${\rm B}'_{y,\Lambda}$)] There is $\lambda>\dfrac{\|y\|_1}{T-t}$
such that $\L$  is {bounded on the subsets} of $I\times y(I)\times B_{\lambda}$ that are well-inside $\Dom(\L)$ w.r.t. $\dist(\cdot, \cdot)$.
   \end{itemize}
   Moreover, assume that  $\L(s, y,y')\in L^1(I)$.
 Then:
\begin{enumerate}[leftmargin=*]
\item
 There is \textbf{no Lavrentiev gap} for ($\mathcal P_{X}^{\mathcal S}$) at $y$.
 \item In addition, assume
 \begin{itemize}
 \item[(${\rm P}_{y,\Psi}$)] There is $m_{y, \Psi}>0$ such that $\Psi(s, z)\ge m_{y,\Psi}$ for all $s\in I, z\in y(I)$.
\end{itemize}
  and that either $\L$ is real valued or
\begin{itemize}
\item[(${\rm L}_{y, \Lambda}$)]
$\displaystyle\lim_{\dist((s,z,v),(\Dom (\L))^c)\to 0^+}\L(s,z,v)=+\infty
\text{ unif. w.r.t. }(s, z)\in I\times y(I)$.
\end{itemize}
 There is \textbf{no Lavrentiev gap} for ($\mathcal P_{X,Y}^{\mathcal S}$) at $y$.
\end{enumerate}
\end{theorem}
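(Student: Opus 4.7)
My approach would extend the reparametrization technique of Cellina--Ferriero--Marchini exploited in \cite{MTrans}, adapting it to the multiplicative structure $L=\L\Psi$ of \cite{CM5}. For each $h$ I would construct a bi-Lipschitz time change $\sigma_h:[t,T]\to [t,T_h^*]$ with $\sigma_h'=v_h$, and set $y_h(\tau):=y(\sigma_h^{-1}(\tau))$; since $y_h$ is then a reparametrization of $y$, the initial condition $y_h(t)=X$ and the state constraint $y_h(I)\subseteq y(I)\subseteq\mathcal S$ are automatically preserved. The natural choice is $v_h(s):=\max(c_h,|y'(s)|/L_h)$ with $L_h\uparrow+\infty$: on the saturated set $A_h:=\{|y'|>c_h L_h\}$ one has $|y_h'|\equiv L_h$, while on $A_h^c$, $|y_h'|=|y'|/c_h\le L_h$, so $y_h$ is $L_h$-Lipschitz. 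For Part~1 the choice $c_h=1$ suffices, after simply restricting $y_h$ to $[t,T]\subset[t,T_h^*]$, noting that $T_h^*\to T$. For Part~2 the preservation of the terminal condition demands $\sigma_h(T)=T$, i.e.\ $\int_t^T v_h\,ds = T-t$; solving for $c_h$ yields a value in $(0,1)$ converging to $1$ exactly because hypothesis $(B'_{y,\L})$ stipulates $\lambda>\|y\|_1/(T-t)$.

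The energy estimate, after changing variable $\tau=\sigma_h(s)$, splits into contributions from $A_h$ and from $A_h^c$. On the saturated set I would invoke radial convexity from the origin: with $r_h:=L_h/|y'|\in(0,1)$,
\[\L(s,y(s),r_h\, y'(s))\le r_h\,\L(s,y(s),y'(s))+(1-r_h)\,\L(s,y(s),0),\]
and the value $\L(s,y(s),0)$ is bounded thanks to $({\rm B}^w_{y,\L})$. Multiplying by $1/r_h=|y'|/L_h$ and integrating, the $A_h$-contribution to $F(y_h)$ is controlled by $\int_{A_h}\L(s,y,y')\Psi(\sigma_h,y)\,ds$ plus an error $\mathcal O(\|y'\chi_{A_h}\|_1/L_h)$, both of which vanish as $h\to\infty$ since $|A_h|\to 0$ and $\L(s,y,y')\in L^1$. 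On $A_h^c$ the reparametrized velocity is $y'/c_h$; the convexity of $\mu\mapsto\L(s,y,y'/\mu)\mu$, continuous on the interior of its domain, ensures $\L(s,y,y'/c_h)\,c_h\to\L(s,y,y')$ as $c_h\to 1$, while condition (S) absorbs the time shift $|\sigma_h(s)-s|\le\|y'\chi_{A_h}\|_1/L_h\to 0$ on $\L$, and $({\rm B}_{y,\Psi}),({\rm C}_{y,\Psi})$ handle the analogous shift on $\Psi$. Dominated convergence then yields $\limsup_h F(y_h)\le F(y)$. The $W^{1,p}$ convergence follows because $s_h\to\mathrm{id}$ uniformly, $y_h\to y$ uniformly, and $|y_h'(\tau)|\le|y'(s_h(\tau))|/c_h$ is $L^p$-dominated.

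The principal obstacle I foresee lies in Part~2 in the extended-valued case: the speed-up $c_h<1$ sends $y'(s)$ to $y'(s)/c_h$, a vector strictly outside the segment from $0$ to $y'(s)$, so the star-shape property of $D_\L$ (which protects shrinking but not expansion) does not guarantee that $(s,y(s),y'(s)/c_h)\in\Dom(\L)$. This is where hypothesis $({\rm L}_{y,\L})$ enters decisively: the blowup of $\L$ near $\partial\Dom(\L)$ combined with condition (D) enables one to replace $y'$ on a small exceptional set by a slightly shorter vector well-inside $\Dom(\L)$ at vanishing energy cost, and $({\rm B}'_{y,\L})$ then bounds $\L$ uniformly on the resulting subset of $I\times y(I)\times B_\lambda$. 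The positivity $({\rm P}_{y,\Psi})$ ensures that $L^1$ control of $\L$ transfers to $L^1$ control of $L=\L\Psi$, closing the estimates.
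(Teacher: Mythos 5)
Your overall strategy is the paper's: a Lipschitz time reparametrization of $y$, the convexity of $\mu\mapsto\L(s,z,v/\mu)\mu$ to estimate the energy of the reparametrized curve, Condition (S) to absorb the time shift, and $({\rm C}_{y,\Psi})$, $({\rm B}_{y,\Psi})$ for the factor $\Psi$. Claim 1 (restrict to $[t,T]$, accept the loss of the terminal value) is essentially what the paper does. But in Claim 2 your construction has a genuine gap: you compensate for the saturated set by the \emph{uniform} slow-down $\sigma_h'=c_h<1$ on all of $A_h^c$, so the reparametrized velocity there is $y'/c_h$ with modulus up to $L_h$. In the extended-valued case nothing guarantees $(s,y(s),y'(s)/c_h)\in\Dom(\L)$ (star-shapedness only protects contraction), and even in the real-valued case the error produced by the subgradient inequality on $A_h^c$ is $(1-c_h)\int_{A_h^c}P^-\bigl(s,y,y'/c_h\bigr)\,ds$, where $P^-$ is evaluated at velocities of magnitude up to $L_h$: the hypotheses bound $P$ from below only on sets that are well-inside $\Dom(\L)$ \emph{and} contained in $I\times y(I)\times B_\lambda$ for the fixed $\lambda$ of $({\rm B}'_{y,\L})$ (Lemma~\ref{lemma:M}(ii)), so there is no dominating function and ``dominated convergence'' cannot be invoked. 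Your proposed repair (``replace $y'$ on a small exceptional set by a shorter vector'') changes the curve and would break the reparametrization structure.

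The paper's resolution is to localize the compensation: one fixes $\mu<1$ and works on $\Omega_\mu=\{s:\,|y'(s)|/\mu<\lambda\}$, which has positive measure precisely because $\lambda>\|y\|_1/(T-t)$ (Chebyshev); using $({\rm L}_{y,\L})$ (or real-valuedness) together with (D) one extracts $\Omega\subseteq\Omega_\mu$ of measure $\ge\Delta(T-t)$ on which $\bigl(s,y(s),y'(s)/\mu\bigr)$ is well-inside $\Dom(\L)$, and then sets $\varphi_\nu'=\mu$ only on a subset $\Sigma_\nu\subseteq\Omega$ of measure $\varepsilon_\nu/(1-\mu)\to0$, keeping $\varphi_\nu'=1$ elsewhere off $S_\nu$. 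The error on $\Sigma_\nu$ is then $\le(1-\mu)\Upsilon^-M_\Psi|\Sigma_\nu|=\Upsilon^-M_\Psi\varepsilon_\nu\to0$ with $\Upsilon$ finite by Lemma~\ref{lemma:M}(ii) and $({\rm B}'_{y,\L})$. A secondary point: on the saturated set you use the inequality $\L(s,y,r_hy')\le r_h\L(s,y,y')+(1-r_h)\L(s,y,0)$, but $(\mathrm{A}_c)$ gives convexity only for $r>0$ and strict star-shapedness does not place $(s,y,0)$ in $\Dom(\L)$, so $\L(s,y,0)$ may be $+\infty$; the paper instead bounds the subgradient $P$ at velocities of modulus exactly $\nu\ge\nu_0$ using $({\rm B}^w_{y,\L})$ (Lemma~\ref{lemma:M}(iii)).
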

\begin{remark}\label{rem:positivity}Notice that in Theorem~\ref{thm:Lav2}, the integrability of $\L(s, y, y')$ is satisfied if $\Psi$ satisfies Condition (${\rm P}_{y,\Psi}$). Indeed, if $\Psi\ge m_{y, \Psi}$ on $I\times y(I)$, then
\[\int_t^T\L(s, y(s), y'(s))\,ds\le \dfrac1{m_{y,\Psi}}F(y)<+\infty.\]
\end{remark}
\begin{remark}[Choice of a suitable distance-like function]\label{rem:dist2}
 The choice of a distance-like function relies on the need of the validity  of Condition (D) (or (${\rm L}_{y, \L}$)) and of (${\rm B}'_{y,\Lambda}$).  Assume that $\dist_j(\cdot, \cdot)$ is a distance-like function defined on a set of triples $\mathcal W_j$ of $I\times\R^n\times\R^n$, $j=1,2$ with \[\forall (s,y,u)\in \Dom(\L)\quad \dist_1((s,y,u), (\Dom(\L))^c)\ge  \dist_2((s,y,u), (\Dom(\L))^c).\] This is the situation, for instance,  if $\dist_1=\dist_u$ and $\dist_2=\dist_e$.
 \begin{itemize}
 \item Since, from \eqref{tag:inclusions}, the sets that are well-inside $\Dom(\L)$ w.r.t. $\dist_2$ are well-inside $\Dom(\L)$ w.r.t. $\dist_1$, the validity of Hypothesis (${\rm B}'_{y,\Lambda}$) w.r.t.  $\dist_1$ implies its validity with  $\dist_2$.
 \item Since, from \eqref{tag:ineq}, for any $(s,z,v')\in\Dom(\L)$ and $\rho>0$,  \begin{equation}\label{tag:ifqgfqi}\dist_2((s,z,v'), (\Dom(\L)^c)\ge \rho\Rightarrow \dist_1((s,z,v'), (\Dom(\L)^c)\ge \rho, \end{equation} then the validity of (D) for $\dist_2$ implies its validity for $\dist_1$.
     \item  Analogously, from \eqref{tag:ifqgfqi}, it follows that the validity of Hypothesis (${\rm L}_{y, \Lambda}$) w.r.t. $\dist_2$ implies its validity w.r.t. $\dist_1$. In particular, there is no way to find a distance-like function for which (${\rm L}_{y, \Lambda}$) is fulfilled if the later is not valid w.r.t. $\dist_u$.
 \end{itemize}
 In particular: For any distance-like function $\dist$  we have
 \[\dist_u((s,y,u), (\Dom(\L))^c)\ge  \dist((s,y,u), (\Dom(\L))^c).\]
 Therefore the validity of (${\rm B}'_{y,\Lambda}$) w.r.t. $\dist_{u}$ implies its validity w.r.t. $\dist$. and if (D) (resp. (${\rm L}_{y, \Lambda}$)) does not hold w.r.t. $\dist_u$ then the property does not hold w.r.t. $\dist$.
     \end{remark}
 \begin{remark}
 The conclusions of  \cite[Theorem 3.1]{CM5} are those of Theorem~\ref{thm:Lav2} when $\mathcal S=\R^n$.
The two theorems do essentially share a same set of assumptions concerning the function $\Psi$.  Concerning $\L$,  both assume
 Assumption {\rm (S)}, which is not technical: The celebrated example by
 Ball and Mizel in \cite{BM} exhibits a positive Lagrangian $\L(s,y,y')$ that is a polynomial, superlinear and convex in $y'$ (thus satisfying all the assumptions of Claim 2 of Theorem~\ref{thm:Lav2} except Condition (S)), for which the Lavrentiev phenomenon occurs for some suitable initial and end boundary data.\\
 However, instead of Conditions (${\rm B}^{w}_{y,\Lambda}$) and (${\rm B}'_{y,\Lambda}$) it is assumed in \cite[Theorem 3.1]{CM5} that:
 \begin{itemize}
 \item[(${\rm B}_{y,\Lambda}$)] There is   $\nu_0>0$ such that $\L$ is bounded on $I\times \mathcal O_y\times B_{\nu_0}$.
 \end{itemize}
 and moreover, for the two end point conditions problem,  instead  of (${\rm L}_{y, \Lambda}$), it is assumed in \cite[Theorem 3.1]{CM5} that
 \begin{itemize}
 \item[(${\rm U}_{y,\L}$)] There is an open subset $U_y$ of $y(I)$ such that, for all $r>0$, $\L$ is bounded on $I\times U_y\times B_r$.
\end{itemize}
 \begin{itemize}
 \item  Hypothesis  (${\rm B}^{w}_{y, \L}$) in Theorem~\ref{thm:Lav2} is less restrictive than  Hypothesis (${\rm B}_{y, \L}$) of \cite[Theorem 3.1]{CM5} and does no more imply that the effective domain of $\L$ contains a rectangle of the  form  $I\times y(I)\times B_{\nu_0}$. As a counterpart, Theorem~\ref{thm:Lav2} requires the additional Hypothesis (${\rm B}'_{y, \L}$).  Figure~1 illustrates the various assumptions in the case of an autonomous Lagrangian with $\Psi\equiv 1$.
 \begin{figure}[h!]
\begin{center}
\includegraphics[width=0.47\textwidth]{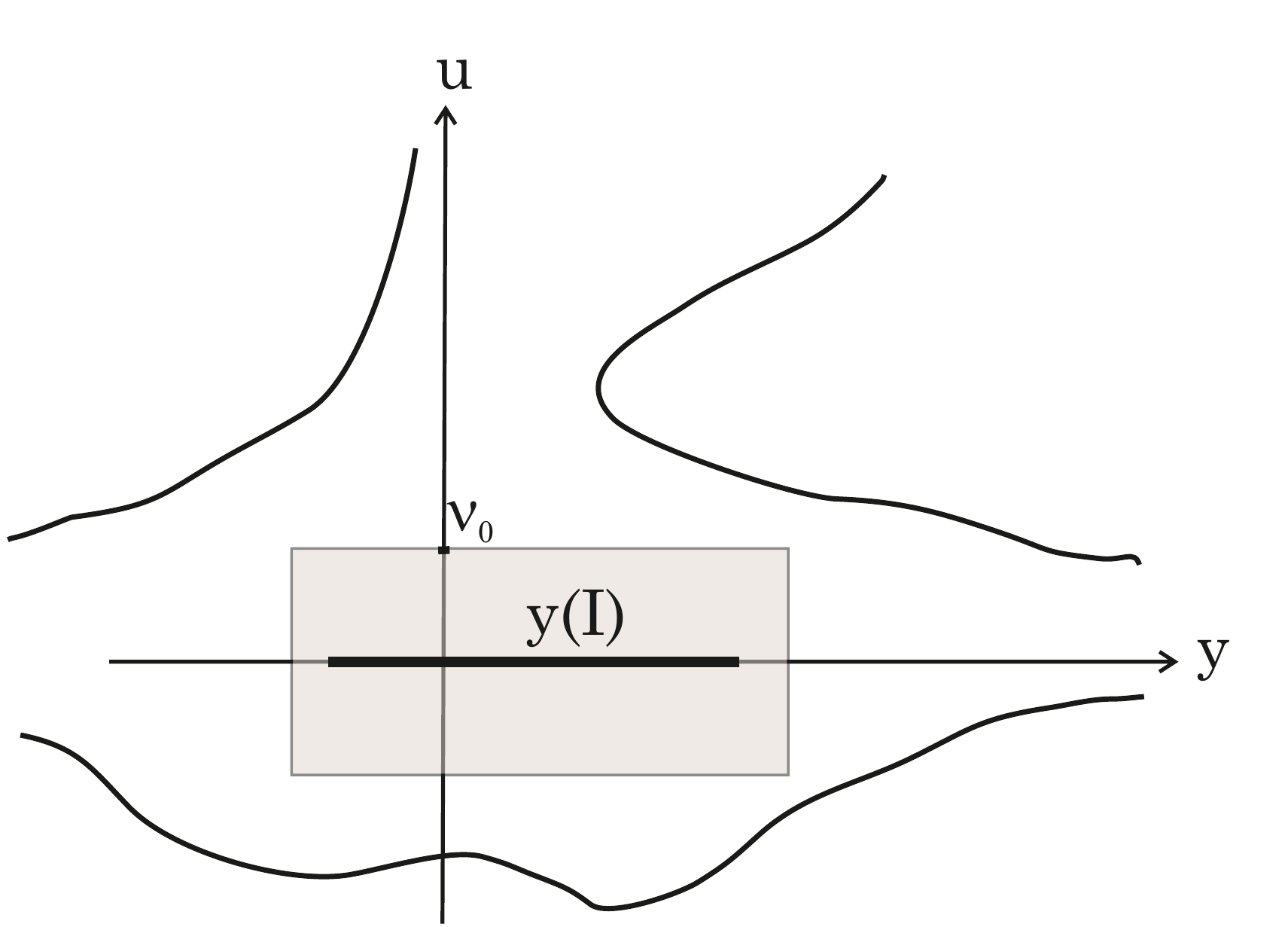}
\includegraphics[width=0.47\textwidth]{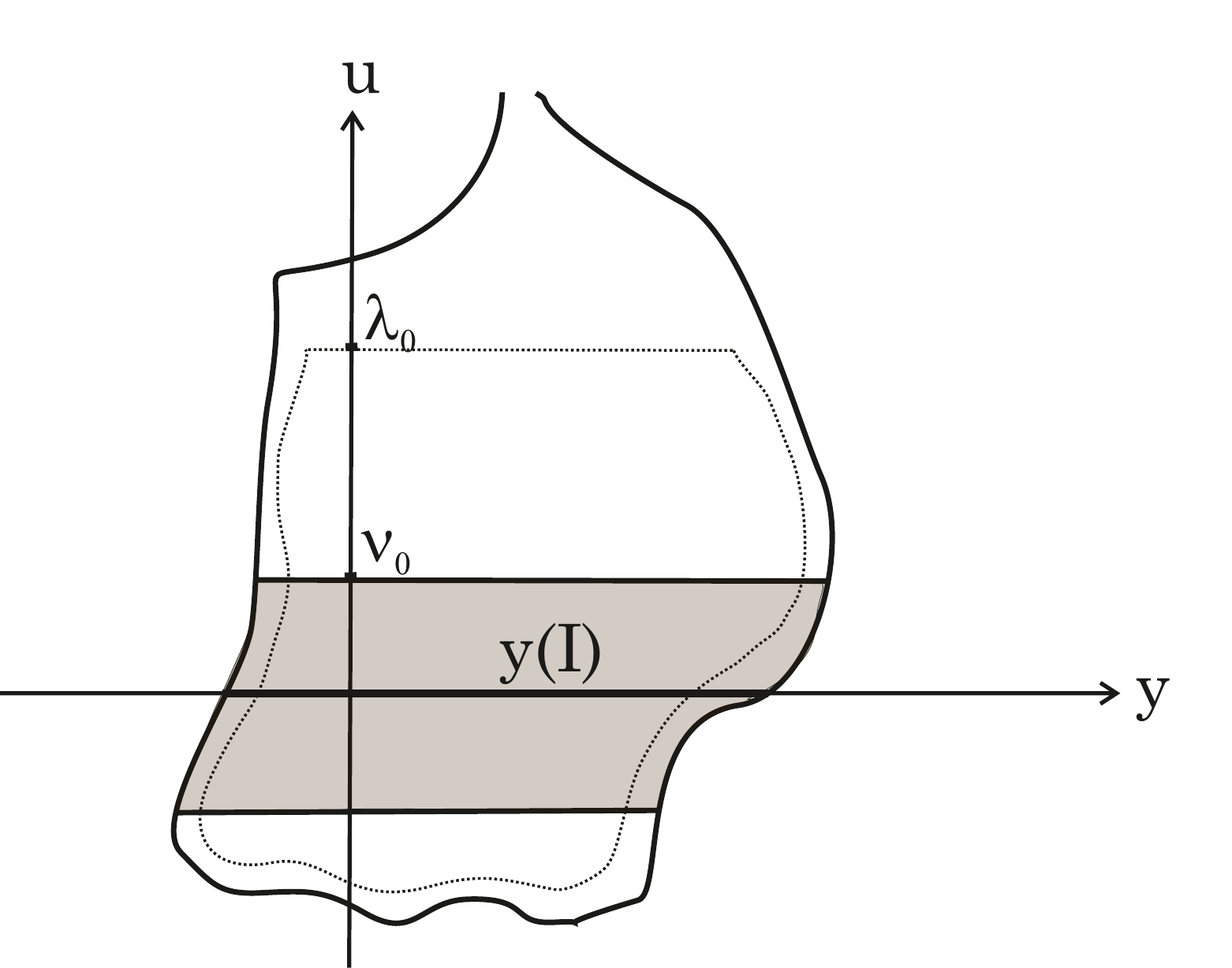}
\caption{{\small The effective domain of an autonomous Lagrangian $\L(y,y')$ ($n=1$, $\dist=\dist_e$) and the validity of the assumptions in Theorem~\ref{thm:Lav2}.
a) Assumption (${\rm B}_{y,\Lambda}$) in \cite[Theorem 3.1]{CM5} requires that  there is ${\nu_0}>0$ such that $\L$ is bounded on  a neighborhhod of $y(I)\times B_{{\nu_0}}$.   b) Hypotheses (${\rm B}^{w}_{y,\Lambda}$) -- (${\rm B}'_{y,\Lambda}$) in Theorem~\ref{thm:Lav2} require that there is a suitable $\nu_0$ such that $\L$ is bounded   on $(y(I)\times B_{\nu_0})\cap \Dom(\L)$ (darker region) and  there is ${\lambda}>\dfrac{\|y\|_1}{T-t}$ such that $\L$ is bounded on  the relatively compact subsets of $(y(I)\times B_{\lambda})\cap \Dom(\L)$  (e.g., the dotted region);  it is also required that $\Dom(\L)$ is star shaped w.r.t. 0 in the last variable.
}}\label{fig:domains}
\end{center}
\end{figure}
\item Hypothesis (${\rm U}_{y,\L}$) implies that $\L$ is finite on the infinite strip
 $I\times U_y\times\R^n$: Functions $\L$ whose effective domain $\L$ is bounded cannot be considered for the two end point conditions problems in \cite[Theorem 3.1]{CM5}.
\item If $\dist=\dist_e$, Hypothesis (${\rm B}'_{y,\Lambda}$) and Condition (D) are satisfied if $\Dom(\L)$ is open in $I\times y(I)\times\R^n$  and  $\L$ is bounded on the relatively compact subsets of $\Dom(\L)$ contained in $I\times y(I)\times \R^n$.
\item  Hypothesis (${\rm L}_{y, \L}$) in Claim 2 of Theorem~\ref{thm:Lav2} implies the validity of  Condition (D). When $\L$ is allowed to take the value $+\infty$, (${\rm L}_{y, \L}$) is not merely a technical assumption: in the absence of (${\rm L}_{y, \L}$), the Lavrentiev phenomenon may occur (see Example~\ref{ex:alberti}).
\end{itemize}
 \end{remark}
\subsection{Non-occorrence of the Lavrentiev phenomenon}
 \begin{corollary}[\textbf{Non-occurrence of the Lavrentiev phenomenon for {\rm (}$P_{X}^{\mathcal S}${\rm)} and for {\rm(}$P_{X,Y}^{\mathcal S}${\rm)}}]\label{coro:Lav3}
 Let $\dist(\cdot, \cdot)$ be a distance-like function satisfying (D). In addition to the Basic Assumptions and the Structure Assumptions on $\L$ suppose, moreover, that for all $K>0$ the following hypotheses hold:
 \begin{itemize}
  \item[(${\rm B}_{\Psi}$)]  $\Psi$ is bounded on $I\times B_K$;
  \item[(${\rm C}_{\Psi}$)] $\Psi(\cdot, z)$ is continuous for every $z\in B_K$;
  \item[(${\rm P}_{\Psi}$)] There is $m_{\Psi}>0$ such that $\Psi(s, z)\ge m_{\Psi}$ for all $s\in I, z\in \R^n$;
\item[(${\rm B}^{w}_{\Lambda}$)] There is   $\nu_0>0$ such that $\L$ is bounded on $(I\times B_K\times B_{\nu_0})\cap \Dom(\L)$;
\item[(${\rm B}'_{\Lambda}$)] For all $\lambda>0$, $\L$  is {bounded on the bounded subsets} of $I\times B_K\times B_{\lambda}$ that are well-inside $\Dom(\L)$ w.r.t. $\dist(\cdot, \cdot)$.
\end{itemize}
Then:
\begin{enumerate}
\item The \textbf{Lavrentiev phenomenon does not occur} for ($\mathcal P_{X}^{\mathcal S}$).
\item Assume, in addition that either $\L$ is real valued or
for all $K>0$,
\begin{itemize}
\item[(${\rm L}_{\Lambda}$)]
$\displaystyle\lim_{\dist((s,z,v),(\Dom (\L))^c)\to 0^+}\L(s,z,v)=+\infty
\text{ unif. w.r.t. }s\in I, z\in B_K$.
\end{itemize}
The \textbf{Lavrentiev phenomenon does not occur} for ($\mathcal P_{X,Y}^{\mathcal S}$).
\item  If, in addition to the assumptions,  \textbf{$\Psi$ satisfies} (${\rm P}_{\Psi}$) and \textbf{$\L$ fulfills} ({\rm G}$_{\L}$), the conclusions of Claims 1,2 hold whenever  Hypotheses (${\rm B}_{\Psi}$), (${\rm C}_{\Psi}$), (${\rm B}^{w}_{\Lambda}$), (${\rm B}'_{\Lambda}$) and (${\rm L}_{\Lambda}$) are satisfied  for just one value of $K>K_0$, where
     \[K_0:=|X|+\dfrac{\inf {\rm(}\mathcal P{\rm)}+\displaystyle m_{\Psi}d(T-t)}
{\displaystyle m_{\Psi}\alpha}, \qquad \mathcal P=\mathcal P_X^{\mathcal S}, \mathcal P_{X, Y}^{\mathcal S}\]
and just one value of $\lambda>\lambda_0$, with
\[\lambda_0:=\dfrac{\inf {\rm(}\mathcal P{\rm )}+\displaystyle m_{\Psi}d(T-t)}
{\displaystyle m_{\Psi}\alpha(T-t)},\qquad \mathcal P=\mathcal P_X^{\mathcal S}, \mathcal P_{X, Y}^{\mathcal S}.\]
\end{enumerate}
 \end{corollary}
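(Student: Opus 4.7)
The plan is to derive the (global) non-occurrence of the Lavrentiev phenomenon from Theorem~\ref{thm:Lav2}, whose (pointwise) non-occurrence of the Lavrentiev gap at a trajectory $y$ follows from the uniform hypotheses of the Corollary once a ball $B_K$ containing $y(I)$ has been selected.

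For Claims 1 and 2, I would pick an arbitrary $y\in\Gamma$ (where $\Gamma=\Gamma_X^{\mathcal S}$ or $\Gamma_{X,Y}^{\mathcal S}$) with $F(y)<+\infty$. Since $y\in W^{1,p}(I;\R^n)$ is continuous on the compact interval $I$, there exists $K>0$ such that $y(I)\subset B_K$. For this $K$, the Corollary's assumptions (${\rm B}_\Psi$), (${\rm C}_\Psi$) and (${\rm B}^w_\Lambda$) immediately yield the pointwise hypotheses (${\rm B}_{y,\Psi}$), (${\rm C}_{y,\Psi}$), (${\rm B}^w_{y,\Lambda}$) of Theorem~\ref{thm:Lav2}, while (${\rm B}'_\Lambda$) applied with any $\lambda>\|y'\|_1/(T-t)$ yields (${\rm B}'_{y,\Lambda}$). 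For Claim 2 I would additionally invoke (${\rm P}_\Psi$) to obtain (${\rm P}_{y,\Psi}$) (which, via Remark~\ref{rem:positivity}, also secures the auxiliary integrability $\L(s,y,y')\in L^1(I)$), and use the real-valued alternative or (${\rm L}_\Lambda$) to deliver (${\rm L}_{y,\Lambda}$). Theorem~\ref{thm:Lav2} then provides a Lipschitz sequence $(y_h)\subset\Gamma$ with $\limsup_h F(y_h)\le F(y)$, so $\inf_{\Lip\cap\Gamma}F\le F(y)$; varying $y\in\Gamma$ and using the trivial reverse inequality establishes \eqref{tag:Lavrentievdefi}.

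For Claim 3 the additional ingredient is an a priori bound on admissible trajectories. Lemma~\ref{lemma:linbelow}, applicable thanks to (${\rm G}_{\L}$) and (${\rm P}_\Psi$), yields
\[
\|y'\|_1\le\frac{F(y)+m_{\Psi}\,d\,(T-t)}{m_{\Psi}\,\alpha}\qquad\text{for every }y\in\Gamma\text{ with }F(y)<+\infty,
\]
and the initial constraint $y(t)=X$ gives $\|y\|_\infty\le|X|+\|y'\|_1$. I then take a minimizing sequence $(y_n)\subset\Gamma$: since $F(y_n)\to\inf(\mathcal P)$, the estimate forces $y_n(I)\subset B_K$ and $\|y_n'\|_1/(T-t)<\lambda$ for $n$ large, as soon as $K>K_0$ and $\lambda>\lambda_0$. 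Hence the Corollary's hypotheses need only be checked for this single pair $(K,\lambda)$ to apply Theorem~\ref{thm:Lav2} to each such $y_n$; a standard diagonal extraction of Lipschitz reparametrizations then produces a Lipschitz minimizing sequence, proving that the phenomenon does not occur.

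The main obstacle is essentially bookkeeping rather than substance: one must verify carefully that each localized hypothesis of Theorem~\ref{thm:Lav2} follows from its uniform counterpart on $B_K$, and, in Claim 3, that the a priori bound from Lemma~\ref{lemma:linbelow} can be exploited to fix $K$ and $\lambda$ independently of the index $n$ along the minimizing sequence.
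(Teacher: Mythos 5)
Your proposal is correct and follows essentially the same route as the paper: both reduce each claim to Theorem~\ref{thm:Lav2} by choosing $K$ so that $B_K$ contains the image of the trajectory (respectively, of the terms of a minimizing sequence), and both use Lemma~\ref{lemma:linbelow} under (${\rm G}_{\L}$) and (${\rm P}_{\Psi}$) to fix a single admissible pair $(K,\lambda)$ with $K>K_0$, $\lambda>\lambda_0$ for all sufficiently advanced terms of the minimizing sequence in Claim 3. The only cosmetic difference is that you phrase Claims 1--2 as an infimum over all finite-energy $y\in\Gamma$ while the paper runs everything through a minimizing sequence, and your ``diagonal extraction'' is just the paper's selection of one Lipschitz competitor $\overline y_j$ with $F(\overline y_j)\le F(y_j)+\tfrac1{j+1}$ per index.
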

 \begin{proof}
 We prove Claim 2 and the part of Claim 3 concerning problem ($\mathcal P_{X, Y}^{\mathcal S}$), the other parts of  Corollary~\ref{coro:Lav3} follow with obvious changes.
 Let $(y_j)_j$ be a minimizing sequence for ($\mathcal P_{X,Y}^{\mathcal S}$) such that
\[\forall j\in\mathbb N\qquad F(y_j)\le \inf {\rm(}\mathcal P_{X,Y}^{\mathcal S}{\rm)}+\dfrac1{j+1}.\]
Fix $j\in\mathbb N$;  the hypotheses of Corollary~\ref{coro:Lav3} with $K:=y_j(I)$ imply the validity of those of Theorem~\ref{thm:Lav2} with $y$ replaced by $y_j$: its application
 yields $\overline y_j\in\Lip(I;\R^n)$
 satisfying the desired boundary conditions and state constraints and, moreover,
 \[F(\overline y_j)\le F(y_j)+\dfrac1{j+1}\le  \inf {\rm(}\mathcal P_{X,Y}^{\mathcal S}{\rm)}+\dfrac2{j+1}.\]
 If $\Psi$ satisfies (${\rm P}_{\Psi}$) and $\L$ satisfies ({\rm G}$_{\L}$), then from Lemma~\ref{lemma:linbelow} for each $j\in\mathbb N$ we have
 \[\|y_j\|_1\le \dfrac{F(y_j)+\displaystyle m_{\Psi}d(T-t)}
{\displaystyle m_{\Psi}\alpha},\]
We may assume, in the proof of Claim 2, that $j$ is big enough in such a way that
\[\begin{aligned}
\dfrac{F(y_j)+\displaystyle m_{\Psi}d(T-t)}
{\displaystyle m_{\Psi}\alpha (T-t)}&\le
\dfrac{\inf {\rm(}\mathcal P_{X,Y}^{\mathcal S}{\rm)}+\frac2{j+1}+\displaystyle m_{\Psi}d(T-t)}
{m_{\Psi}\alpha (T-t)}\\
&\le \lambda_0+\frac2{(j+1)m_{\Psi}\alpha (T-t)}<\lambda
\end{aligned}\]
and
 \[\begin{aligned}\|y_j\|_{\infty}&\le |X|+\|y_j\|_1\\
&\le |X|+\dfrac{\inf {\rm(}\mathcal P_{X,Y}^{\mathcal S}{\rm)}+\frac2{j+1}+ m_{\Psi}d(T-t)}
{m_{\Psi}\alpha}\\
&\le K_0+\frac2{(j+1)m_{\Psi}\alpha}<K,\end{aligned}\]
so that $y_j(I)\subset B_{\overline K}$.
 The claim  follows.\end{proof}
 \begin{remark}
 As noticed in \cite{CM5}, the assumption $\Psi\ge m_{\Psi}>0$ in Claim 2 of Corollary~\ref{coro:Lav3} cannot be weakened to $\Psi\ge 0$, in general.  Indeed, Mania's Example~\ref{ex:Mania1} shows that the Lavrentiev phenomenon with prescribed initial and end conditions may occur when $\Psi\ge 0$ is allowed to take the value 0, even when $\L$ is autonomous.
 \end{remark}
 \begin{remark}\label{rem:hdyroi}
 When $\mathcal S=\R^n$, the conclusions of Corollary~\ref{coro:Lav3} are those of \cite[Corollary 3.6]{CM5}. The hypotheses concerning $\Psi$ do almost overlap, whereas the requirements  on $\L$ differ quite a lot. Indeed, in \cite[Corollary 3.6]{CM5} it is required that:
 \begin{itemize}
 \item[(${\rm B}_{\Lambda}$)] For all $K>0$, there is   $\nu_0>0$ such that $\L$ is bounded on $I\times B_K\times B_{\nu_0}$.
 \end{itemize}
 and, for the two end point conditions problem, that $\L$ is {\em real valued} and
 \begin{itemize}
\item[(U$_{\L}$)]
For all $K, r>0$, $\L$ is bounded on $I\times B_K\times B_r$.
\end{itemize}
It appears that the Hypotheses of Corollary~\ref{coro:Lav3} are more suitable than those of \cite[Corollary 3.6]{CM5} to deal with extended valued Lagrangians and allow functions $\L$ that possess a {\em bounded} effective domain. Indeed:
\begin{itemize}
     \item Hypothesis  (${\rm B}^{w}_{\Lambda}$) in Corollary~\ref{coro:Lav3} is of course fulfilled if (${\rm B}_{\Lambda}$) of  \cite[Corollary 3.6]{CM5} holds. However it is satisfied if just $\L$ is real valued and bounded on the bounded subsets  {\em of}  $\Dom(\L)$.
    \item Hypothesis  (${\rm B}'_{\Lambda}$) is of course equivalent to the fact that $\L$ is  bounded on the bounded subsets  that are well-inside $\Dom(\L)$: Claim 3 of Corollary~\ref{coro:Lav3} motivates the formulation in terms of $K$ and $\lambda$.
\item Differently from Hypothesis (U$_{\L}$) of \cite[Corollary 3.6]{CM5},  Condition (${\rm L}_{\Lambda}$) in Corollary~\ref{coro:Lav3} does not force $\L$ to be real valued.
 \end{itemize}
 \end{remark}
 Many of the assumptions of Corollary~\ref{coro:Lav3} are satisfied for  real valued, continuous Lagrangians, it is worth writing explicitly the result. In this case, the main novelty with respect to Claim 2 in \cite[Corollary 3.6]{CM5} is  the presence of the {\em state constraint} in the variational problem, at the price of radial convexity in the velocity variable.
 \begin{corollary}[\textbf{Non-occurrence of the Lavrentiev phenomenon for {\rm(}$P_{X,Y}^{\mathcal S}${\rm)} -- real valued case}]\label{coro:Lav3real}
 Suppose, in addition to  the Basic Assumptions and the radial convexity (A$_c$) of $\L(s,z,\cdot)$, that $\Psi, \L$ are real valued and:
   \begin{itemize}
   \item $\Psi$ is continuous and stictly positive;
   \item  $\L$ is bounded on bounded sets.
   \end{itemize}
Then the {Lavrentiev phenomenon does not occur} for  for ($\mathcal P_{X,Y}^{\mathcal S}$).
 \end{corollary}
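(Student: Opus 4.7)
The plan is to derive Corollary~\ref{coro:Lav3real} by a direct application of Claim 2 of Theorem~\ref{thm:Lav2} to each admissible trajectory of $(\mathcal P_{X,Y}^{\mathcal S})$ with finite energy, and then to take a nearly-optimal such trajectory to obtain the equality of infima. In other words, I would not go through Corollary~\ref{coro:Lav3} at all, since its hypotheses require a uniform positive lower bound on $\Psi$ over the whole of $I\times\R^n$ and a linear growth from below on $\L$, neither of which is assumed here. Instead, for each admissible $y$, the relevant ``bounded universe'' is the compact set $I\times y(I)$, and the continuous/positive/bounded-on-bounded-sets assumptions of the corollary are comfortably strong enough on such a compact set.

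First I would observe that since $\L$ is real valued, $\Dom(\L)=I\times\R^n\times\R^n$ and the complement $(\Dom(\L))^c$ is empty. Consequently, for any distance-like function $\dist$, every subset of $\Dom(\L)$ is well-inside $\Dom(\L)$, Condition (D) holds trivially (one may take $v'=v$), and Claim 2 of Theorem~\ref{thm:Lav2} is invoked through its ``$\L$ real valued'' alternative, so that Hypothesis (${\rm L}_{y,\L}$) is not needed.

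Next, I would fix an admissible $y\in\Gamma_{X,Y}^{\mathcal S}$ with $F(y)<+\infty$ and check the remaining hypotheses of Theorem~\ref{thm:Lav2} on $y$. Since $y\in W^{1,p}(I;\R^n)\subset C(I;\R^n)$, the set $I\times y(I)$ is compact; the continuous, strictly positive $\Psi$ therefore attains a positive minimum $m_{y,\Psi}$ and a finite maximum on it, yielding at once (${\rm B}_{y,\Psi}$), (${\rm C}_{y,\Psi}$) and (${\rm P}_{y,\Psi}$). The positivity of $\L$ combined with $\Psi\ge m_{y,\Psi}$ on $I\times y(I)$ gives $\L(\cdot,y,y')\le F(y)/m_{y,\Psi}$ in $L^1(I)$, as required. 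Finally, for any $\nu_0>0$ and any $\lambda>\|y\|_1/(T-t)$, the sets $I\times y(I)\times B_{\nu_0}$ and $I\times y(I)\times B_\lambda$ are bounded, so $\L$ is bounded on them by assumption; this delivers (${\rm B}^w_{y,\L}$) and (${\rm B}'_{y,\L}$).

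With all hypotheses of Theorem~\ref{thm:Lav2} (Claim 2) verified, the theorem produces a sequence $(y_h)_h\subset\Lip(I;\R^n)\cap\Gamma_{X,Y}^{\mathcal S}$ with $\limsup_h F(y_h)\le F(y)$. Given $\varepsilon>0$, choosing $y$ with $F(y)\le\inf(\mathcal P_{X,Y}^{\mathcal S})+\varepsilon$ gives $\inf\{F(z):z\in\Lip(I;\R^n)\cap\Gamma_{X,Y}^{\mathcal S}\}\le\inf(\mathcal P_{X,Y}^{\mathcal S})+\varepsilon$, which by arbitrariness of $\varepsilon$ is the desired non-occurrence of the Lavrentiev phenomenon. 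I do not anticipate any genuine obstacle: the content of the corollary is precisely that, for real-valued $\L$ bounded on bounded sets with $\Psi$ continuous and strictly positive, the extended-valued technicalities of Theorem~\ref{thm:Lav2} collapse, and the only care needed is to work trajectory-by-trajectory rather than attempting to verify the globally-quantified hypotheses of Corollary~\ref{coro:Lav3}.
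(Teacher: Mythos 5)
Your argument is correct, and in substance it is the paper's own route: the paper obtains Corollary~\ref{coro:Lav3real} as a specialization of Corollary~\ref{coro:Lav3}, and the proof of the latter consists precisely of applying Claim 2 of Theorem~\ref{thm:Lav2} to each term of a minimizing sequence after verifying the hypotheses on the compact set $I\times y_j(I)$ --- exactly your trajectory-by-trajectory reduction, with the same compactness checks of (${\rm B}_{y,\Psi}$), (${\rm C}_{y,\Psi}$), (${\rm P}_{y,\Psi}$), (${\rm B}^{w}_{y,\Lambda}$), (${\rm B}'_{y,\Lambda}$), the same use of the ``real valued'' alternative in place of (${\rm L}_{y,\Lambda}$), and the same observation that $(\Dom(\L))^c=\emptyset$ trivializes Condition (D) and the well-inside requirement. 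So you have not so much taken a different path as inlined the intermediate corollary, which is perfectly legitimate. Two small remarks on your framing. First, your stated reason for avoiding Corollary~\ref{coro:Lav3} is only half right: the linear growth condition ({\rm G}$_{\L}$) enters only in Claim 3 of that corollary (to restrict the range of $K$ and $\lambda$), not in Claim 2, so it is no obstacle; your other concern, that (${\rm P}_{\Psi}$) is written as a bound over all of $I\times\R^n$, is a fair literal reading, but the proof of Corollary~\ref{coro:Lav3} only ever uses the hypotheses on $I\times B_K$ with $K$ adapted to $y_j$, so a continuous strictly positive $\Psi$ suffices either way. Second, Condition (S) is not restated in the corollary but is treated by the paper as a standing assumption on $\L$ (it is used in Step \emph{xvi)} of the proof of Theorem~\ref{thm:Lav2}, and the Ball--Mizel example quoted in the text shows it cannot be dropped); your verification inherits it silently exactly as the paper's does, so this is a shared convention rather than a gap in your argument.
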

\section{Proof of the main result}
The proof of Theorem~\ref{thm:Lav2} follows the lines of the proof of \cite[Theorem 5.1]{MTrans} where the attention was more focused on the construction of a equi-Lipschitz minimizing sequence, with some uniformity with respect to the initial time and datum. We will emphasize the the new points which are:
\begin{itemize}
\item Condition (${\rm B}'_{y,\L}$)  is new and arises from \cite{BM4} (see Lemma~\ref{lemma:M} below); it weakens the Condition b) in \cite[Proposition 4.24]{MTrans} since the latter requires that $I\times B_K\times B_{\lambda}\subset \Dom(\L)$;
\item The presence of the function $\Psi$;
\item The convergence $y_h\to y$ in $W^{1,1}(I; \R^n)$;
\item Claim 1 of Theorem~\ref{thm:Lav2} and Claim 1 of Corollary~\ref{coro:Lav3} are new even in the autonomous case,  with $\Psi\equiv 1$ and $\L(s,y,u)=L(y,u)$.
\end{itemize}
\subsection{A fundamental Lemma}
The proof of Theorem~\ref{thm:Lav2} relies on the following result.
\begin{lemma}
\cite{BM4, CM5}
\label{lemma:M} Let $\mathcal K$ be a bounded set and let $\dist(\cdot, \cdot)$ be a distance-like function. Assume that:
\begin{itemize}
\item[a)] There is  ${\lambda}>0$ such that $\L$  is {bounded on the subsets of }   $[0,T]\times \mathcal K\times B_{\lambda}$ that are well-inside $\Dom(\L)$ w.r.t. $\dist(\cdot, \cdot)$;
\item[b)] There is $\nu_0>0$ such that $\L$ is bounded on $([0,T]\times  \mathcal K\times B_{\nu_0})\cap\Dom(\L)$.
\end{itemize}
Let, for any $(s,z,v)\in\Dom(\L)$, $P(s,z,v)\in\partial_{\mu}\Big(\L\Big(s,z,\dfrac{{v}}{\mu}\Big){\mu}\Big)_{{\mu}=1}$. Then:
\begin{itemize}
\item[{\em i)}] $P$ is bounded on   the bounded subsets of  $I\times \mathcal K\times B_{\lambda}$ that are well-inside $\Dom(\L)$ w.r.t. $\dist(\cdot, \cdot)$;
\item[{\em ii)}] For all $\rho>0$,
\begin{equation}\label{tag:MMinf}
-\infty<\!\!\!\!\!\!\displaystyle\inf_{\substack{s\in I,z\in \mathcal K, |v|\le \lambda\\ (s,z,v)\in \Dom(\L)\\ \dist((s,z,v),(\Dom{\L})^c)\ge\rho}}
P(s,z,v).
\end{equation}
\item[{\em iii)}] There is  $\nu_0>0$ such that
\begin{equation}\label{tag:MMsup}
\sup_{\substack{s\in I, z\in \mathcal K, |v|\ge \nu_0\\ (s,z,v)\in \Dom(\L)}}
P(s,z,v)<+\infty.
\end{equation}
\end{itemize}
\end{lemma}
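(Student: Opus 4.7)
The strategy rests on the convexity of $\mu \mapsto g_{s,z,v}(\mu) := \L(s,z,v/\mu)\,\mu$ on $(0,\infty)$, which is the equivalent form of the radial convexity hypothesis (A$_c$) recorded in~\eqref{tag:subinequality}. Since $P(s,z,v) \in \partial g_{s,z,v}(1)$, the subgradient inequality combined with $\L \ge 0$ produces the one-sided estimates
\[ P(s,z,v) \le \frac{\L(s,z,v/\mu)\,\mu}{\mu-1}\quad(\mu > 1), \qquad P(s,z,v) \ge -\frac{\L(s,z,v/\mu)\,\mu}{1-\mu}\quad(0 < \mu < 1), \]
valid whenever $(s,z,v/\mu) \in \Dom(\L)$. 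The proof of each conclusion then reduces to choosing $\mu$ so that $\L(s,z,v/\mu)$ is uniformly bounded---either by hypothesis b) when $|v/\mu| \le \nu_0$, or by hypothesis a) when $v/\mu$ lies in a well-inside subset of $B_\lambda$---and so that the denominator $|\mu-1|$ is uniformly bounded away from $0$. The key auxiliary tool is the triangle-type inequality
\[ \dist((s,z,v'), \Dom(\L)^c) \ge \dist((s,z,v), \Dom(\L)^c) - |v-v'|, \]
which follows from the definition of a distance-like function, since $\dist$ agrees with the Euclidean distance on pairs sharing the first two coordinates.

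Conclusion \emph{iii)} is the simplest: for $|v| \ge \nu_0$ set $\mu := |v|/\nu_0 \ge 1$; star-shapedness gives $(s,z,v/\mu) \in \Dom(\L)$ with $|v/\mu|=\nu_0$, hypothesis b) yields $\L(s,z,v/\mu) \le M_0$, and the upper estimate forces $P(s,z,v) \le 2M_0$ as soon as $|v| \ge 2\nu_0$, proving \emph{iii)} with $\nu_0$ replaced by $2\nu_0$. For the upper bound on $P$ in \emph{i)} and \emph{ii)} one picks $\mu^+ := 1 + \rho/(4\lambda) > 1$: since $|v/\mu^+| \le |v| \le \lambda$ and $(s,z,v/\mu^+)$ is well-inside $\Dom(\L)$ with distance $\ge \rho/2$ by the triangle-type estimate above, hypothesis a) bounds $\L(s,z,v/\mu^+)$ uniformly, hence $P$ from above. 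For the lower bound on $P$, in the regime $|v| \le \lambda - \rho/2$ the symmetric choice $\mu^- := 2|v|/(2|v|+\rho) < 1$ works: $|v/\mu^-| = |v| + \rho/2 \le \lambda$, the well-inside distance at $(s,z,v/\mu^-)$ is $\ge \rho/2$, and $1 - \mu^- = \rho/(2|v|+\rho) \ge \rho/(2\lambda+\rho)$ is uniformly bounded away from $0$, so hypothesis a) provides the required uniform lower bound.

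The main technical difficulty is the borderline regime $|v| \in (\lambda - \rho/2,\, \lambda]$ of conclusion \emph{ii)}, where the above choice pushes $v/\mu^-$ outside $B_\lambda$ and hypothesis a) is not directly applicable; here the constraint $v/\mu^- \in \bar B_\lambda$ forces $\mu^- = |v|/\lambda$, and the denominator $1 - \mu^- = (\lambda - |v|)/\lambda$ degenerates as $|v| \to \lambda$. A secondary case analysis is required to compensate, exploiting that in the same range $|v - v/\mu^-| = \lambda - |v| < \rho/2$ still leaves ample well-inside room, together with a finer one-sided analysis of the convex subdifferential $\partial g_{s,z,v}(1)$ via secant slopes of $g_{s,z,v}$ at interior points of its full interval of finiteness, to which hypothesis a) can still be applied. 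This secondary step is anticipated to be the most delicate part of the proof.
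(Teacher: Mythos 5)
Your proposal follows essentially the same route as the paper: one-sided bounds for $P(s,z,v)$ from the subgradient inequality for the convex map $\mu\mapsto\L(s,z,v/\mu)\mu$ evaluated at radially rescaled points, with hypothesis b) used after scaling $v$ down to norm $\nu_0$ for the upper bound iii), and hypothesis a) used on a slightly rescaled, still well-inside set for i)--ii). The paper phrases this through $Q(s,z,v)\in\partial_r\L(s,z,rv)_{r=1}$ and the identity $P=\L-Q$, which is equivalent to your direct use of $\partial_{\mu}\big(\L(s,z,v/\mu)\mu\big)_{\mu=1}$ under the reparametrization $r=1/\mu$; your treatment of iii) and of the upper bound on $P$ in i)--ii) is correct and matches the paper's argument up to constants. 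Both you and the paper also use the same triangle-type estimate $\dist((s,z,v'),\Dom(\L)^c)\ge\dist((s,z,v),\Dom(\L)^c)-|v-v'|$ without further comment.

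The step you leave open --- the lower bound on $P$ when $|v|$ is near $\lambda$, where the enlarged point $v/\mu^-$ (equivalently $(1+\varepsilon)v$) escapes $B_{\lambda}$ --- is exactly the step the paper does not address either: in its proof of i) the paper bounds $Q(s,z,v)$ from above by evaluating $\L$ at $\big(s,z,(1+\tfrac{\rho}{2\lambda})v\big)$, whose norm can reach $\lambda+\rho/2$, and then invokes hypothesis a), which only covers $B_{\lambda}$. Be aware that the ``finer secant-slope analysis'' you anticipate cannot close this from hypothesis a) alone: a convex function of $r$ bounded on $[0,1]$ may have arbitrarily large right derivative at $r=1$. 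Concretely, $\L(s,z,v):=z^{-1}(|v|-\lambda+z)^{+}$ for $z\in(0,1]$ satisfies a), b), radial convexity and positivity, yet $P(s,z,v)\to-\infty$ as $z\to0^+$ for $|v|$ near $\lambda$, so \eqref{tag:MMinf} genuinely needs boundedness of $\L$ on well-inside subsets of a ball slightly larger than $B_{\lambda}$ (or the conclusion must be stated for a slightly smaller radius). So you have not missed an idea that the paper supplies; you have located a point where the lemma's hypothesis must be read with a margin in $\lambda$, which is harmless in the application to Theorem~\ref{thm:Lav2} since only velocities of norm strictly below $\lambda$ are used there and $\lambda$ may be shrunk while keeping $\lambda>\|y\|_1/(T-t)$.
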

The proof of Lemma~\ref{lemma:M} follows narrowly the arguments given in \cite[Lemma 4.18, Proposition 4.24]{MTrans} and the new arguments involved in  \cite[Proposition 3.15]{BM4} in a different framework; we give the full details due to its importance in the proof of Theorem~\ref{thm:Lav2} for the convenience of the reader.
\begin{proof} We will use the fact that $P(s,z,v)=\L(s,z, v)-Q(s,z,v)$, for some $Q(s,z,v)\in \partial_r\L(s, z, ru)_{r=1}$.\\
{\em i)} Let $(s,z,v)\in \Dom(\L)$ and  $Q(s,z,v)\in \partial_{\mu}\L(s, z, ru)_{\mu=1}$. Suppose that, for some $\rho>0$, $z\in \mathcal K$, $|v|\le \lambda$  and $\dist((s,z,v),(\Dom(\L))^c)\ge\rho$. The fact that $\dist(\cdot, \cdot)$ is a distance-like function implies that
\[\dist\left(\left(s,z,v+\dfrac{\rho}{2\lambda}v\right), (\Dom(\L))^c\right)\ge \dfrac{\rho}{2}.\]
Assuming that
\[\partial_r\L(s, z, ru)_{r=1}\not=\emptyset\] we obtain
\[\L\left(s,z,v+\dfrac{\rho}{2\lambda}v\right)-\L(s,z,v)\ge\dfrac{\rho}{2\lambda} Q(s,z,v).\]
The  boundedness assumption of $\L$ implies  that $Q(s,z,v)$ is bounded above by a constant  depending only on $\lambda$ and  $\rho$.
Similarly, from
\[\L\left(s,z,v-\dfrac{\rho}{2\lambda}v\right)-\L(s,z,v)\ge -\dfrac{\rho}{2C} Q(s,z,v),\]
we deduce an upper bound for $Q$.\\
{\em ii)} The set
\[\{(s,z,v)\in\Dom(\L):\, z\in \mathcal K, |v|\le \lambda,  \dist((s,z,v),(\Dom{\L})^c)\ge\rho\}\]
is contained in $[0,T]\times \mathcal K\times B_{\lambda}$ and is well-inside $\Dom(\L)$.
The claim follows immediately from {\em i)}.
\\
{\em iii)}  Let $(s,z,v)\in\Dom(\L)$ with $z\in \mathcal K$ and $|v|\ge {\nu_0}, v\in\mathcal U$.
The assumption that $\Dom(\L)$ is star-shaped in the control variable implies that  \[\Big(s, z, {\nu_0}\dfrac{v}{|v|}\Big)\in\Dom(\L)\] and thus
\begin{equation}
\L\Big(s, z, {\nu_0}\dfrac{v}{|v|}\Big)-\L(s, z, v)\ge Q(s,z,v)\Big(\dfrac{{\nu_0}}{|v|}-1\Big),
\end{equation}
from which we deduce that
\begin{equation}\label{tag:allconvM}
\L(s, z, v)-Q(s,z,v)\le \L\Big(s, z, {\nu_0}\dfrac{v}{|v|}\Big)-\dfrac{{\nu_0}}{|v|}Q(s,z,v).
\end{equation}
The assumptions imply that $\L\Big(s, z, {\nu_0}\dfrac{v}{|v|}\Big)\le C_1(\mathcal K, \nu_0)$ for some  constant $C_1(\mathcal K, \nu_0)$ depending only on $\mathcal K, \nu_0$.
We now provide un upper bound for $-Q(s,z,v)$. Since  $\Big(s, z, \dfrac{\nu_0}2\dfrac{v}{|v|}\Big)\in\Dom(\L)$, then
\begin{equation}\label{tag:qigiug1}\L\Big(s, z, \dfrac{\nu_0}2\dfrac{v}{|v|}\Big)-\L(s, z, v)\ge Q(s,z,v)\left(\dfrac{\nu_0}{2|v|}-1\right),\end{equation}
so that the fact that $\L$ is bounded from below by $-d$  gives
\begin{equation}\label{tag:qigiug2}\begin{aligned}-Q(s,z,v)&\le \dfrac{1}{\left(1-\dfrac{\nu_0}{2|v|}\right)}\left[\L\Big(s, z, \dfrac{\nu_0}2\dfrac{v}{|v|}\Big)-\L(s, z, v)\right]\\
&\le 2\left[\L\Big(s, z, \dfrac{\nu_0}2\dfrac{v}{|v|}\Big)+d\right]
\le C_2(\mathcal K, \nu_0)
\end{aligned}\end{equation}
for some constant $C_2(\mathcal K, \nu_0)$ depending only on $\mathcal K$ and $\nu_0$.
It follows from \eqref{tag:qigiug1} -- \eqref{tag:qigiug2} that the right-hand side of \eqref{tag:allconvM} is bounded above by a constant depending only on $\mathcal K$ and $\nu_0$.
\end{proof}
\subsection{Change of variables and approximations}
We shall often make use of the following change of variables formula for Lebesgue integrals.
\begin{proposition}[\textbf{Change of variables for Lebesgue integrals}]\cite[Corollary 3.16]{Bernal}\label{prop:bernal}
Let $f\ge 0$ be measurable and $\gamma: I\to I$ be bijective, absolutely continuous with $\gamma'>0$ on $I$. Then, for every $A\subset I$,
$f\in L^1(A)\Leftrightarrow  (f\circ\gamma)\gamma'\in L^1(\gamma^{-1}(A))$ and
\[\int_Af(s)\,ds=\int_{\gamma^{-1}(A)}f(\gamma(\tau))\gamma'(\tau)\,d\tau.\]
\end{proposition}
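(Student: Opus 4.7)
The plan is the standard three-step measure-theoretic approximation: indicator functions, then simple functions, then arbitrary non-negative measurable $f$. First, observe that because $\gamma$ is absolutely continuous, bijective, with $\gamma'>0$ a.e., $\gamma$ must be strictly increasing (since $\gamma(b)-\gamma(a)=\int_a^b\gamma'>0$ for $a<b$), so $\gamma^{-1}$ exists and is continuous and monotone. I would then establish the identity for $f=\chi_E$, namely
\[
|A\cap E|=\int_{\gamma^{-1}(A\cap E)}\gamma'(\tau)\,d\tau,
\]
starting with closed subintervals $E=[a,b]$ of $I$, where both sides equal $b-a$ by the fundamental theorem of calculus applied to the absolutely continuous $\gamma$. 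Countable additivity of Lebesgue measure extends the identity to open sets, and a Dynkin $\pi$-$\lambda$ argument lifts it to all Borel sets.

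The delicate point is to then lift the identity from Borel to Lebesgue-measurable sets, for which one needs that $\gamma^{-1}$ maps null sets to null sets (Luzin's property (N) for $\gamma^{-1}$). I would prove this from what has just been established. Given a null set $N\subset I$ and $\varepsilon>0$, cover $N$ by open intervals $J_i$ with $\sum_i|J_i|<\varepsilon$. For each integer $m\ge 1$, using the already-proven Borel identity $\int_{\gamma^{-1}(J_i)}\gamma'\,d\tau=|J_i|$ together with the pointwise inequality $\gamma'\ge 1/m$ on the slice $E_m:=\{\tau\in I:\gamma'(\tau)\ge 1/m\}$,
\[
\bigl|\gamma^{-1}(J_i)\cap E_m\bigr|\le m\int_{\gamma^{-1}(J_i)\cap E_m}\gamma'(\tau)\,d\tau\le m|J_i|.
\]
Summing over $i$ and then letting $\varepsilon\to 0$ gives $|\gamma^{-1}(N)\cap E_m|=0$; since $I\setminus\bigcup_m E_m$ is itself null (as $\gamma'>0$ a.e.), letting $m\to\infty$ yields $|\gamma^{-1}(N)|=0$. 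With this in hand, writing any Lebesgue-measurable $E$ as a Borel set modulo a null set, and using Luzin (N) both for $\gamma$ (immediate from absolute continuity) and for $\gamma^{-1}$, gives the identity for $\chi_E$ in full generality.

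The remaining two steps are routine: linearity yields the formula for non-negative simple functions, and the monotone convergence theorem applied to an increasing simple-function approximation $f_k\uparrow f$ delivers it for arbitrary non-negative measurable $f$, since $(f_k\circ\gamma)\gamma'\uparrow(f\circ\gamma)\gamma'$ on $\gamma^{-1}(A)$. The $L^1$ equivalence is then immediate, the two integrals sharing a common (possibly infinite) value. The main technical obstacle is the Luzin (N) property for $\gamma^{-1}$; the argument above avoids circularity by deriving it from the Borel change-of-variables, which itself requires only (N) for $\gamma$.
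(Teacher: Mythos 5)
Your proof is correct. The paper does not actually prove this proposition—it is imported as a citation to \cite[Corollary 3.16]{Bernal} with no argument given—so there is no internal proof to compare against; what you have written is a self-contained derivation of the cited result. The structure (intervals, then Borel sets via a $\pi$--$\lambda$ argument, then Lebesgue sets, then simple functions, then monotone convergence) is the standard one, and the genuinely delicate point, Luzin's property (N) for $\gamma^{-1}$, is handled correctly: slicing by $E_m=\{\tau:\gamma'(\tau)\ge 1/m\}$ gives $|\gamma^{-1}(N)\cap E_m|\le m\varepsilon$ from the already-established Borel identity, and the hypothesis $\gamma'>0$ a.e.\ disposes of the complement of $\bigcup_m E_m$, so no circularity arises. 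The same property is also what guarantees that $f\circ\gamma$ is Lebesgue measurable when $f$ is merely Lebesgue (not Borel) measurable, a point worth stating explicitly when you pass to general $f$. Two minor remarks: the statement's ``for every $A\subset I$'' must of course be read as ``for every measurable $A$''; and in the $\pi$--$\lambda$ step you should note that both set functions are \emph{finite} measures on the bounded interval $I$ agreeing on a generating $\pi$-system, which is what licenses the uniqueness conclusion.
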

The following approximation argument will be used in the sequel.
\begin{lemma}\label{lemma:approx100}
Let $f\in L^1(I)$ and $(\varphi_{\nu})_{\nu}$ be a sequence of bijiective, absolutely continuous functions $\varphi_{\nu}: I\to I$  with, for all $\nu\in\mathbb N$:
\begin{itemize}
\item  $\varphi_{\nu}'>0$ on $I$,
\item Lipschitz inverse $\psi_{\nu}$;
\item $(\|\psi_{\nu}\|_{\infty})_{\nu}$ bounded;
\item $\varphi_{\nu}(t)\to t$ uniformly.
\end{itemize}
 Then
$f\circ\varphi_{\nu}\to f\text{ in } L^1(I)$.
\end{lemma}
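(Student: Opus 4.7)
The plan is to use the classical density argument: approximate $f$ by a continuous function and control each resulting piece separately. For any $g\in C(I)$ write
\[
\|f\circ\varphi_{\nu}-f\|_{L^1(I)}\le \|f\circ\varphi_{\nu}-g\circ\varphi_{\nu}\|_{L^1(I)}+\|g\circ\varphi_{\nu}-g\|_{L^1(I)}+\|g-f\|_{L^1(I)}.
\]
Given $\eps>0$, the idea is to pick $g$ so that the first and third summands are small \emph{uniformly in} $\nu$, and then let $\nu\to+\infty$ to kill the middle term.

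The middle term is handled by uniform continuity: since $g\in C(I)$ is uniformly continuous and $\varphi_{\nu}\to \mathrm{id}_I$ uniformly on $I$, we have $g\circ\varphi_{\nu}\to g$ uniformly, hence in $L^1(I)$. For the first term I would apply Proposition~\ref{prop:bernal} with $\gamma=\varphi_{\nu}$ (its hypotheses are exactly the first two bullets), using the substitution $s=\varphi_{\nu}(\tau)$, $d\tau=\psi_{\nu}'(s)\,ds$:
\[
\int_I \bigl|f-g\bigr|(\varphi_{\nu}(\tau))\,d\tau=\int_I\bigl|f-g\bigr|(s)\,\psi_{\nu}'(s)\,ds\le L\,\|f-g\|_{L^1(I)},
\]
where $L$ is a uniform bound on the Lipschitz constants of the $\psi_{\nu}$ (this is how I read the third bullet, since $\|\psi_{\nu}\|_{\infty}$ itself is automatically bounded by $\max\{|t|,|T|\}$). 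Thus choosing first $g\in C(I)$ with $\|f-g\|_{L^1(I)}<\eps/(L+1)$, and then $\nu$ so large that $\|g\circ\varphi_{\nu}-g\|_{L^1(I)}<\eps$, yields the conclusion.

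The main (and only) subtle point is the first term: the pointwise composition $|f-g|\circ\varphi_{\nu}$ need not be pushed forward to $|f-g|$ without a Jacobian, because $\varphi_{\nu}$ is not assumed measure-preserving. What makes the proof work is precisely the uniform bound on $\psi_{\nu}'$, which guarantees that the pushforward of Lebesgue measure by $\varphi_{\nu}$ has density uniformly bounded by $L$; everything else is textbook. No further delicate point is expected.
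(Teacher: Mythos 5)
Your proof is correct and follows essentially the same route as the paper's: the same three-term triangle inequality with a dense class of nice functions, uniform convergence of $g\circ\varphi_{\nu}\to g$ for the middle term, and the change of variables of Proposition~\ref{prop:bernal} together with the uniform bound on $\|\psi_{\nu}'\|_{\infty}$ for the first term. Your reading of the third bullet as a uniform bound on the Lipschitz constants of the $\psi_{\nu}$ is exactly how the paper uses it.
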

\begin{proof} Consider a sequence $(g_m)_m$ of smooth functions on $I$ such that $g_m\to f$ in $L^1(I)$. For each $m, \nu$ in $\N$ we have
\[\|f\circ \varphi_{\nu}-f\|_1\le \|f\circ \varphi_{\nu}-g_m\circ\varphi_{\nu}\|_1+\|g_m\circ\varphi_{\nu}-g_m\|_1+\|g_m-f\|_1.
\]
Clearly, for each $m$ we have $g_m\circ\varphi_{\nu}\to g_m$ uniformly as $\nu\to +\infty$.
Moreover, from Proposition~\ref{prop:bernal}, the change of variable $\tau=\varphi_{\nu}(s)$ gives
\[\begin{aligned}\|f\circ \varphi_{\nu}-g_m\circ\varphi_{\nu}\|_1&=\int_I|f(\tau)-g_m(\tau)|\psi_{\nu}'(\tau)\,d\tau\\
&\le C\|f-g_m\|_1,\end{aligned}\]
forv a suitable constant $C$. The conclusion follows.
\end{proof}
\subsection{Proof of Theorem~\ref{thm:Lav2}}
Notice first that, in any case,  $\L(s,y,y')\in L^1(I)$ (see Remark~\ref{rem:positivity}).\\
{\em I) Proof of Claim 2.}
We fix $\eta>0$ and prove the existence of a function  $\overline y$ for {\rm (}$\mathcal P_{X,Y}^{\mathcal S}${\rm)} such that
\begin{itemize}
\item[
a)] $\overline y$ is obtained via a reparametrization of $y$ and satisfies the boundary conditions;
\item[b)]  $y'_{\nu}$ is bounded and $\overline y$ is Lipschitz;
\item[c)] $F(\overline y)\le F(y)+\eta$.
\item[d)] $\|\overline y'-y'\|_{L^1(I;\R^n)}\le \eta$
\end{itemize}
\begin{itemize}[leftmargin=*]
\item[{\em i)}]{\em  Definition of $\Xi(\nu)$, $\Upsilon(\rho)$.}\\
Let ${\lambda}$ be as in Hypothesis (${\rm B}'_{y,\Lambda})$.
Let $P(s,z,v)\in \partial_{\mu}\left[\L\Big(s, z, \dfrac{{v}}{\mu}\Big ){\mu}\right]_{{\mu}=1}$.
For $\rho>0$ and  $\nu>0$ we define
\[\Upsilon(\rho):=\displaystyle\inf_{\substack{s\in I,z\in y(I)\\|v|<\lambda\\ (s,z,v)\in\Dom(\L)\\ \dist((s,z,v),(\Dom(\L))^c)\ge\rho}}
P(s,z,v),\quad \Xi(\nu):=\sup_{\substack{s\in I,z\in y(I)\\|v|\ge \nu\\ (s, z,v)\in \Dom(\L)}}
P(s,z,v).\]
We may assume that $\Xi(\nu)>-\infty$ for all $\nu> 0$, otherwise there is $\nu>0$ such that $|y'(s)|\le \nu$ a.e. on $I$ and the conclusion of Theorem~\ref{thm:Lav2} follows trivially.
\item[{\em ii)}]{\em  Choice of $\overline\rho$.}
\begin{itemize}
\item[$\bullet$] There is $\overline\rho>0$ in such a way that
$\Upsilon(\rho)<+\infty$  for all $0<\rho\le \overline\rho$. Indeed it follows easily from Step {\em i)} that the set
$\{s\in I:\, |y'(s)|<\lambda\}$ is non negligible, so that $(s, y(s), y'(s))\in \Dom(\L)$ and $|y'(s)|<\lambda$ for $s$ on a non negligible subset $Z$ of $I$. Here Condition (D) plays its role: for any $s\in Z$ we have $\dist((s, y(s), v'), (\Dom(\L)^c))>0$ for some $|v'|\le |y'(s)|<\lambda$; therefore there is a non negligible subset $Z'$ of  $Z$ and  $\overline \rho>0$ such that, for a.e. $s\in Z'$,
\[\dist ((s, y(s), y'(s)), (\Dom(\L))^c)\ge \overline\rho, \quad |y'(s)|<\lambda.\]
\item[$\bullet$]
It follows from Hypotheses (${\rm B}^{w}_{y,\Lambda}$) -- (${\rm B}'_{y,\Lambda}$) and  Lemma~\ref{lemma:M} that there is $\nu_0>0$ such that
\begin{equation}\label{tag:Lchoicenu500}\forall \rho\in ]0, \overline\rho]\quad\forall\nu\ge\nu_0\qquad
\Upsilon(\rho)\in\R,\quad \,\Xi(\nu_0)\in \R.\end{equation}
\end{itemize}
\item[{\em v)}] {\em For any $\dfrac{\|y\|_1}{\lambda(T-t)}<\mu<1$ let
$\Omega_{\mu}:=\left\{s\in I:\, \dfrac{|y'(s)|}{\mu}<\lambda\right\}.$
Then
\begin{equation}\label{tag:Lestimateomegamu}|\Omega_{\mu}|\ge (T-t)-\dfrac{\|y\|_1}{\mu \lambda}>0.\end{equation}}
Indeed,
$\|y\|_1\ge\displaystyle\int_{[t,T]\setminus \Omega_{\mu}}|y'(s)|\,ds\ge \lambda\mu |[t,T]\setminus \Omega_{\mu}|$.
\item[{\em iv)}] {\em For every $\rho>0$  let
\[I_{\rho}:=\{s\in I:\,\dist((s,y(s), y'(s)), (\Dom(\L))^c)\ge 2\rho\}.\]
Then $\displaystyle\lim_{\rho\to 0}|I_{\rho}|=T-t$.
}
Indeed,
from Hypothesis (${\rm L}_{y,\L}$), there exists $\rho_\ell>0$ satisfying
\begin{equation}\label{tag:Lchoicedelta}\forall s\in I,\, \forall v\in\R^n\,\,  \dist ((s, y(s), v), \partial \Dom(\L))<2\rho_\ell\Rightarrow \L(s,y(s),v)\ge\ell.\end{equation}
Since $\L, \Psi\ge 0$, from Hypothesis (${\rm P}_{y,\Psi}$) we obtain
\begin{equation}\label{tag:748}\int_t^T\L(s, y(s), y'(s))\,ds\le \dfrac1{m_{y,\Psi}}F(y)<+\infty.\end{equation}we  have
\[\begin{aligned}
F(y)&\ge \int_t^T\L(s,y(s),y'(s))\Psi(s, y(s))\,ds\\&\ge
m_{y,\Psi}\,\int_{I\setminus I_{\rho}}\L_j(s,y(s),y'(s))\,ds\ge m_{y,\Psi}\,\ell|I\setminus I_{\rho}|,
\end{aligned}\]
whence
$F(y)/m_{y,\Psi}\ge \ell\big(T-t-|I_{\rho}|\big)$, from which we obtain the estimate
\begin{equation}\label{tag:Lqgiqg}\forall\, 0<\rho\le\rho_{\ell}\qquad |I_{\rho}|\ge \dfrac{\ell(T-t)-F(y)/m_{y,\Psi}}{\ell}\to T-t\text{ as }\ell\to +\infty.\end{equation}
Therefore $\left|I_{\rho_{\ell}}\right|\to (T-t)$ as $\ell\to +\infty$;
the claim follows.
\item[{\em v)}]{\em Let $\overline\rho>0$ be as in Claim ii). There are $\mu=\mu(B,\delta)\in ]0, 1[, \rho\le \overline\rho$, $\Delta\in ]0, 1]$ and a subset $\Omega$ of $\Omega_{\mu}$ with $|\Omega|\ge \Delta(T-t)$, such that, for a.e. $s\in\Omega$
    \begin{equation}\begin{aligned}\label{tag:LNEW!}\left( s, y(s), \dfrac{y'(s)}{\mu}\right)\in \Dom(\L), \,\dfrac{|y'(s)|}{\mu}<\lambda,\\ \dist \left(\left(s, y(s), \dfrac{|y'(s)|}{\mu}\right), (\Dom(\L))^c\right)\ge\rho.\end{aligned}\end{equation}}\\
    See Step {\em vi) } of the proof of \cite[Theorem 5.1]{MTrans}, it is a consequence of Step {\em iv)}. It is essential here that the chosen distance-like function $\dist$ acts as the Euclidean one on the pairs of $\mathcal W$.
\item[{\em vi)}]
\emph {For every $\nu>0$ define \[S_{\nu}:=\{s\in I:\, |y'(s)|> \nu\},\quad \varepsilon_{\nu}:=
\displaystyle\int_{S_{\nu}}\left(\dfrac{|y'(s)|}{\nu}-1\right)\,ds.\]
Then \[|S_{\nu}|\to 0,\quad 0\le \varepsilon_{\nu}\le \dfrac{\|y\|_1}{\nu}\rightarrow 0\text{ as }\nu\to +\infty.\]
}
Indeed,
\[\nu |S_{\nu}|\le\int_{S_{\nu}}|y'(s)|\,ds\le \|y\|_1.\]
\item[{\em vii)}]
{\em Choice of $\nu\ge \nu_0$, of $\Sigma_{\nu}\subseteq \Omega$ and definition of $\Xi$, $\Upsilon, \Theta$.}\\
Taking into account Claim {\em vi)}, we choose $\nu\ge \max\{\nu_0, \lambda\}$  in such a way that \begin{equation}\label{tag:Lchoicenu33}\dfrac{\|y\|_1}{\nu}\le \min\left\{\,(1-\mu)\Delta\,(T-t), \dfrac{\varepsilon_*}{2(1+{\|y\|_{\infty}})}\right\}.\end{equation}
Let \[M_{\Psi}:=\sup\{\Psi(s,z):\, s\in I, \, z\in y(I)\}\]
Notice that, for each $\nu\ge \nu_0$,
\[\Xi(\nu)^+\le \Xi(\nu_0)^+:\]
Let
$ \Upsilon:=\Upsilon(\rho)$, where $\rho$ is as in Step {\em v)} and set
\[\Theta:=2(1+{\|y\|_{\infty}})\left(\|\L(s, y, y')\|_1+\beta \|y'\|_p^p+\|\gamma\|_{\infty}\right).\]
We choose $\nu$ is large enough in such a way that
    \begin{equation}\label{tag:nularge}\dfrac{\|y\|_1}{\nu}M_{\Psi}(\Theta+\Xi(\nu)^++\Upsilon^-)\le \dfrac {{\|y\|_1}}{\nu}M_{\Psi}(\Theta+\Xi(\nu_0)^++\Upsilon^- )\le\dfrac{\eta}{2}, \end{equation}
  From now on we set $\Xi:=\Xi(\nu)$.
Choose a measurable subset ${\Sigma_{\nu}}$ of $ \Omega$ such that $|{\Sigma_{\nu}}|=\dfrac{\varepsilon_{\nu}}{1-\mu}$: this is possible since, from  \eqref{tag:Lchoicenu33} and Step {\em v)}, \[\dfrac{\varepsilon_{\nu}}{1-\mu}\le \Delta(T-\delta)\le  \Delta(T-t)\le |\Omega|.\]
\item[{\em viii)}]
{\em  $S_{\nu}\cap \Omega$ is negligible.}\\
This follows as in Step {\em x)} of the proof of \cite[Theorem 5.1]{MTrans}.
\item[{\em ix)}]
{\em The change of variable $\varphi_{\nu}$.}
We  introduce the following absolutely continuous change of variable $\varphi_{\nu}:I\to \R$ defined by
\begin{equation}\label{tag:newphi}\varphi_{\nu}(t):=t,\quad\text{for a.e. }\tau\in I\quad \varphi_{\nu}'(\tau):=\begin{cases}\dfrac{|y'(\tau)|}{{\nu}} &\text{ if }\tau \in S_{\nu},\\
\,\,\mu &\text{ if }\tau\in{\Sigma_{\nu}},\\
\,\,1 &\text{ otherwise}.
\end{cases}\end{equation}
As  in Step {\em xi)} of the proof of \cite[Theorem 5.1]{MTrans}, $\varphi_{\nu}$ is  strictly increasing and $\varphi_{\nu}:I\to I$ is bijective; let us denote by $\psi_{\nu}$ its inverse, which is  absolutely continuous and  Lipschitz, with $\|\psi_{\nu}'\|_{\infty}\le \dfrac1{\mu}$.
\item[{\em x)}]
{\em Set, for all $s\in [t,T]$,
\begin{equation}\label{tag:yNchange}y_{\nu}(s):=y(\psi_{\nu}(s)).\end{equation}
Then  $y_{\nu}\in W^{1, p}([t,T]; \R^n)$ satisfies the boundary conditions, thus proving a) of the initial claim of the proof.}
  This follows exactly as in Step {\em xii)} of the proof of \cite[Theorem 5.1]{MTrans}.
\item[{\em xi)}] {\em $y_{\nu}\in W^{N+1,\infty}([t,T]; \R^n)$ and $y_{\nu}'$ is  bounded.}\\
Indeed, for all $s\in [t,T]$,
\[y_{\nu}'(s)=\begin{cases} \nu\dfrac{\,y'(\psi_{\nu}(s))}{|y'(\psi_{\nu}(s))|} &\text{ if }s \in \varphi_{\nu}(S_{\nu}),\\
\dfrac{y'(\psi_{\nu}(s))}{\mu} &\text{ if }s\in{\varphi_{\nu}(\Sigma_{\nu})},\\
y'(\psi_{\nu}(s)) &\text{ otherwise.}
\end{cases}\]
Since $|y'(s)|\le  \nu$ a.e. out of $S_{\nu}$ it turns out from the fact that $\Sigma_{\nu}\subseteq\Omega$ that
 \begin{equation}\label{tag:estimnu}|y'_{\nu}(s)|\le
 \max\left\{\nu, \lambda\right\}=\nu.\end{equation}
\item[{\em xii)}]
{\em The following estimate holds:
\[\|\varphi_{\nu}(\tau)-\tau\|_{\infty}\le \int_{t}^{T}\left|\varphi_{\nu}'(s)-1\right|\,ds\le 2\varepsilon_{\nu}.\]}\\
This follows exactly as in Step {\em xiv)} of the proof of \cite[Theorem 5.1]{MTrans}.

\item[{\em xiii)}]
Since $\varphi_{\nu}(s)\to (s, y(s))$ pointwise, from Hypothesis (${\rm C}_{y,\Psi}$) and the fact that $\L(s,y,y')\in L^1(I)$,
we may choose $\nu$ big enough in such a way that
\begin{equation}\label{tag:LPsiNnew}
\left|\int_t^T\L(s, y, y')\left(\Psi(\varphi_{\nu},y)-\Psi(s, y)\right)\,ds\right|\le\dfrac{\eta}{2}.
\end{equation}
\item[{\em xiv)}]
\emph {
A.e. in   $\Omega$,
\begin{equation}\label{tag:LC} \L\left(\varphi_{\nu} ,  y, \dfrac{y'}{\mu}\right)\mu-\L(\varphi_{\nu} ,   y, y')\le -(1-\mu) \Upsilon .\end{equation}}\\
This goes exactly as in Step {\em vii)} of the proof of \cite[Theorem 5.1]{MTrans}.
\item[{\em xv)}]
{\em Estimate of $F(y_{\nu})$ in terms of $\displaystyle\int_{t}^T\L\big(\varphi_{\nu},  y,y'\big)\Psi(\varphi_{\nu}, y)\,d\tau$.}\\
We have
\begin{equation}\label{tag:LJm}F(y_{\nu})=
\int_{t}^T\L(s, y_{\nu}, y_{\nu}')\Psi(s, y_{\nu})\,ds.\end{equation}
 Taking into account \eqref{tag:yNchange}, the change of variables $s=\varphi_{\nu}(\tau), \tau\in I$ yields (in what follows, for brevity, we omit the argument of the functions):
\begin{equation}\begin{aligned}\label{tag:LvaluenewJ}\int_{t}^T\L(s, y_{\nu}, y_{\nu}')\Psi(s, y_{\nu})\,ds&=
\int_{t}^T\left[\L\Big(\varphi_{\nu},   y, \dfrac{y'}{\varphi_{\nu}'}\Big)\varphi_{\nu}'\right]\Psi(\varphi_{\nu},  y)\,d\tau\\
&=\mathcal J_{S_{\nu}}+\mathcal J_{{\Sigma_{\nu}}}+\mathcal J_1^{\nu},\end{aligned}\end{equation}
where we set
\[\begin{aligned}\mathcal J_{S_{\nu}}&:=
\int_{S_{\nu}}\left[\L\Big(\varphi_{\nu},  y, \nu\dfrac{y'}{|y'|}\Big)\dfrac{|y'|}{\nu}\right]\Psi(\varphi_{\nu}, y)\,d\tau,\\
\mathcal J_{{\Sigma_{\nu}}}&:=
\int_{{\Sigma_{\nu}}}\left[\L\Big(\varphi_{\nu},  y,  \dfrac{y'}{\mu}\Big)\mu\right]\Psi(\varphi_{\nu},  y)\,d\tau,\\
 \mathcal J_{1}^{\nu}&:=
\int_{I\setminus ({\Sigma_{\nu}}\cup S_{\nu})} \L\left(\varphi_{\nu},   y,  y'\right)\Psi(\varphi_{\nu},  y)\,d\tau.\end{aligned}\]
The main ingredient here is the subgradient inequality \eqref{tag:subinequality} applied as follows: for every $(s,z,v)\in \Dom(\L)$ and $\mu>0$ such that $\left(s,z, \dfrac{v}{\mu}\right)\in Dom(\L)$ we have
\begin{equation}\label{tag:subinequalitybis}\forall\mu>0\quad \L\left(s, z, \dfrac v{\mu}\right)\mu\le \L(s,z,v)+ P\left(s,z,\dfrac{v}{\mu}\right)(\mu-1).\end{equation}
\begin{itemize} [leftmargin=*]
\item[$\bullet$] {\em Estimate of $\mathcal J_{S_{\nu}}$:}
\begin{equation}\label{tag:LIS}\mathcal J_{S_{\nu}}\le \int_{S_{\nu}}\L\big(\varphi_{\nu},  y, y'\big)\Psi(\varphi_{\nu},  y)\,d\tau+\Xi^+\,M_{\Psi}\varepsilon_{\nu}.\end{equation}
Following  Step {\em xv)} of the proof of \cite[Theorem 5.1]{MTrans}, a.e. in $S_{\nu}$ we obtain
\begin{equation}\label{tag:Lestim45}\L\Big(\varphi_{\nu},   y, \nu\dfrac{y'}{|y'|}\Big)\dfrac{|y'|}{\nu}\le
\L\big(\varphi_{\nu},   y,y'\big)+\Big(\dfrac{|y'|}{\nu}-1
\Big)\Xi.\end{equation}
Notice, in view of the proof of Claim 1,  that the validity of \eqref{tag:Lestim45} does not depend on steps {\em iv) -- v)} and thus it does not rely  on Hypothesis (${\rm L}_{y, \L}$), or on   Hypothesis (${\rm P}_{y,\Psi}$), or on the fact that $\L$ is supposed to be real valued.
We deduce from \eqref{tag:Lestim45} that a.e. in $S_{\nu}$
\begin{multline}\left[\L\Big(\varphi_{\nu},   y, \nu\dfrac{y'}{|y'|}\Big)\dfrac{|y'|}{\nu}\right]\Psi(\varphi_{\nu},  y)\\\le
\L\big(\varphi_{\nu},   y, y'\big)\Psi(\varphi_{\nu},  y)+\Big(\dfrac{|y'|}{\nu}-1
\Big)M_{\Psi}\,\Xi^+,\end{multline}
whence \eqref{tag:LIS}.
\item[$\bullet$] {\em Estimate of $\mathcal J_{{\Sigma_{\nu}}}$.}
Since ${\Sigma_{\nu}}\subseteq \Omega$ and $|{\Sigma_{\nu}}|=\dfrac{\varepsilon_{\nu}}{1-\mu}$,  it is immediate  from \eqref{tag:LC} of Step {\em xiv)}  that
\begin{equation}\label{tag:LISigma}\begin{aligned}\mathcal J_{{\Sigma_{\nu}}}&\le \int_{{\Sigma_{\nu}}}\L\big(\varphi_{\nu},   y, y'\big)\Psi(\varphi_{\nu},  y)\,d\tau+(1-\mu)\Upsilon^- \,M_{\Psi}|{\Sigma_{\nu}}|\\
&\le \int_{{\Sigma_{\nu}}}\L\big(\varphi_{\nu},   y, y'\big)\Psi(\varphi_{\nu},  y)\,d\tau+\Upsilon^- M_{\Psi} {\varepsilon_{\nu}}.\end{aligned}
\end{equation}
\end{itemize}
Therefore, from \eqref{tag:LJm}, \eqref{tag:LvaluenewJ}, \eqref{tag:LIS} and \eqref{tag:LISigma} we deduce the required estimate
\begin{equation}\label{tag:Luqfuqyfdque}\begin{aligned}F(y_{\nu})&
=\left(\mathcal J_{S_{\nu}}+\mathcal J_{{\Sigma_{\nu}}}+\mathcal J_1^{\nu}\right)\\
&\le
\int_{t}^T\L\left(\varphi_{\nu},   y,  y'\right)\Psi(\varphi_{\nu},  y)\,d\tau+\varepsilon_{\nu}M_{\Psi}\left(\Xi^+
+\Upsilon^-  \right).
\end{aligned}\end{equation}
\item[{\em xvi)}]
{\em Estimate of $\displaystyle\int_{t}^T\L\big(\varphi_{\nu},   y,  y'\big)\Psi(\varphi_{\nu},  y)\,d\tau$:}
\begin{equation}\label{tag:Lqigdiuqgdf}
\int_{t}^T\!\L\big(\varphi_{\nu},  y,  y'\big)\Psi(\varphi_{\nu},  y)\,d\tau\le\!
\int_t^T\!\L\big(\tau, y,y'\big)\Psi(\tau, y)\,d\tau+\varepsilon_{\nu}M_{\Psi}\Theta+\dfrac{\eta}{2}.\end{equation}
Indeed,  a.e. on $I$ we have
\[\L\big(\varphi_{\nu}, y,y'\big)\Psi(\tau, y)=Q_{1,\nu}(\tau)+Q_{2,\nu}(\tau)+\L\big(\tau,y,y'\big)\Psi(\tau, y),\]
where
\[Q_{1,\nu}(\tau):=\L\big(\varphi_{\nu},  y, y'\big)\Psi(\varphi_{\nu},  y)-\L\big(\tau, y, y'\big)\Psi(\varphi_{\nu}, y),\]
\[Q_{2,\nu}(\tau)=\L\big(\tau, y, y'\big)\Psi(\varphi_{\nu},  y)-\L\big(\tau, y, y'\big)\Psi(\tau, y).\]
It follows from \eqref{tag:LPsiNnew} that \[\int_t^T|Q_{2,\nu}(\tau)|\,d\tau\le\dfrac{\eta}{2}.\]
Condition {\rm (S)} (with $K:=\|y\|_{\infty}$) and Step {\em xii)} imply that
\[\begin{aligned}\int_t^T|Q_{1,\nu}&(\tau)|\,d\tau\le M_{\Psi}\int_t^T\left|\L\big(\varphi_{\nu},  y, y'\big)-\L\big(\tau, y, y'\big)\right|\,d\tau\\
&\le 2M_{\Psi}(1+{\|y\|_{\infty}})\varepsilon_{\nu}\int_t^T\kappa\L(\tau,  y(\tau),y'(\tau))+\beta|y'(\tau)|+\gamma(\tau)\,d\tau.
\end{aligned}\]
It follows from  Step {\em i)} and the integrability of $\L(s,y, y')$ that
\[\int_t^T|Q_{1,\nu}(\tau)|\,d\tau\le  2M_{\Psi}(1+{\|y\|_{\infty}})\varepsilon_{\nu}\left(\kappa \|\L(s, y, y')\|_1+\beta \|y'\|_p^p+\|\gamma\|_{\infty}\right)\]
which gives \eqref{tag:Lqigdiuqgdf}.
\item[{\em xvii)}]
{\em Final estimate of $F(y_{\nu})$.}
From  \eqref{tag:Luqfuqyfdque} and \eqref{tag:Lqigdiuqgdf} of Steps {\em xv) -- xvi)}, we obtain
\begin{equation}\label{tag:LfinalX}
F(y_{\nu})\le F(y)+\varepsilon_{\nu}M_{\Psi}\left(\Theta+\Xi^+ +\Upsilon^- \right)+\dfrac{\eta}2.
\end{equation}
   The choice of $\nu$ in \eqref{tag:nularge} and the fact that $\varepsilon_{\nu}\le\dfrac{{\|y\|_1}}{\nu}$ yield c).
\item[{\em xviii)}]
{\em We may choose $\nu$ big enough in such a way that $\|y_{\nu}'-y'\|_{L^1(I;\R^n)}\le \eta$.}\\
Indeed,
\begin{equation}\begin{aligned}\label{tag:oqifhoqfhe}
\int_t^T\left|y_{\nu}'-y'\right|^p\,ds&=\int_t^T\left|\dfrac{y'(\psi_{\nu})}{\varphi_{\nu}'(\psi_{\nu})}-y'  \right|^p\,ds\\
&=\int_t^T\left|\dfrac{y'}{\varphi'_{\nu}}-y'(\varphi_{\nu}) \right|^p\varphi'_{\nu}\,d\tau\\
&=\int_{[t,T]\setminus (S_{\nu}\cup \Sigma_{\nu})}*\,d\tau+\int_{\Sigma_{\nu}}*\,d\tau+\int_{S_{\nu}}*\,d\tau
\end{aligned}\end{equation}
where in the above $*$ stands for $\displaystyle \left|\dfrac{y'}{\varphi'_{\nu}}-y'(\varphi_{\nu}) \right|^p\varphi'_{\nu}$.
It follows from  the definition of $\varphi_{\nu}$ in Step {\em ix)} that:
\begin{itemize}[leftmargin=*]
\item[$\bullet$]
\[\begin{aligned}
\int_{[t,T]\setminus (S_{\nu}\cup \Sigma_{\nu})}*\,d\tau&=\int_{[t,T]\setminus (S_{\nu}\cup \Sigma_{\nu})}\left|y'-y'(\varphi_{\nu}) \right|^p\,d\tau\\
&\le \int_t^T\left|y'-y'(\varphi_{\nu}) \right|^p\,d\tau\to 0\quad \nu\to +\infty,
\end{aligned}\]
as a consequence of the fact that,  from Step {\em xi)}, $\|\varphi_{\nu}(\tau)-\tau\|_{\infty}\to 0$ (see Lemma~\ref{lemma:approx100}).
\item[$\bullet$]
\begin{equation}\label{tag:qgdiqgd}\begin{aligned}
\int_{\Sigma_{\nu}}*\,d\tau
&:=\int_{\Sigma_{\nu}}\left|\dfrac{y'}{\mu}-y'(\varphi_{\nu}) \right|^p\varphi'_{\nu}\,d\tau\\
&\le 2^p\left(\dfrac1{\mu^{p-1}}\int_{\Sigma_{\nu}}|y'|^p\,d\tau+\int_{\varphi_{\nu}(\Sigma_{\nu})}|y'|^p\,ds\right).
\end{aligned}\end{equation}
Since $|\Sigma_{\nu}|\to 0$ as $\nu\to +\infty$ then $\displaystyle\int_{\Sigma_{\nu}}|y'|^p\,d\tau\to 0$  and as $\nu\to +\infty$. Moreover, from Steps {\em vi) -- vii)},
\[\begin{aligned}|\varphi_{\nu}(\Sigma_{\nu})|&=\int_{\varphi_{\nu}(\Sigma_{\nu})}1\,ds
=\int_{\Sigma_{\nu}}\varphi'_{\nu}\,d\tau\\
&=\mu|\Sigma_{\nu}|=\dfrac{\mu}{1-\mu}\varepsilon_{\nu}\le \dfrac{\mu \|y\|_1}{\nu(1-\mu)}\to 0\end{aligned}\]
as $\nu\to +\infty$. It follows from \eqref{tag:qgdiqgd} that $\displaystyle\int_{\Sigma_{\nu}}*\,d\tau\to 0$ as $\nu\to +\infty$.
\item[$\bullet$]
\begin{equation}\label{tag:ieugfquiefg}\begin{aligned}
\int_{S_{\nu}}*\,d\tau&:=\int_{S_{\nu}}\left|\dfrac{y'}{\varphi'_{\nu}}-y'(\varphi_{\nu}) \right|^p\varphi'_{\nu}\,d\tau\\
&\le 2^p\left(\nu^p|S_{\nu}|+\int_{\varphi_{\nu}(S_{\nu})}|y'|^p\,ds\right)
\end{aligned}\end{equation}
Now, since $y'\in L^p([t,T])$ and, from Step {\em v)}, $|S_{\nu}|\to 0$ as $\nu\to +\infty$, then
\[\nu^p|S_{\nu}|\le \int_{S_{\nu}}|y'|^p\,d\tau\to 0\quad \nu\to +\infty.\]
Moreover,
\[\begin{aligned}|\varphi_{\nu}(S_{\nu})|&=\int_{\varphi_{\nu}(S_{\nu})}1\,d\tau
=\int_{S_{\nu}}\varphi_{\nu}'\,ds
\\&=\int_{S_{\nu}}\dfrac{|y'|}{\nu}\,ds\le \dfrac{\|y'\|_1}{\nu}\to 0\quad \nu\to +\infty.
\end{aligned}\]
It follows from \eqref{tag:ieugfquiefg} that
$\displaystyle\int_{S_{\nu}}*\,d\tau\to 0$ as $\nu\to +\infty$.
\end{itemize}
Therefore, we deduce from \eqref{tag:oqifhoqfhe} that \[\displaystyle\int_t^T\left|y_{\nu}'-y'\right|^p\,ds\to 0\quad \nu\to +\infty,\]
which concludes the proof of Theorem~\ref{thm:Lav2}.
\end{itemize}
{\em Proof of Claim 1}.  The proof differs slightly from that of Claim 2. As in the proof of \cite[Theorem 3.1]{CM5}, the change of variables  $\varphi_{\nu}$ maps $[t,T]$ onto a bigger interval $\varphi_{\nu}(I)$ containing $I$, and $|\varphi_{\nu}(I)|\to |I|$ as $\nu\to +\infty$.\\
 More precisely, referring to the proof of Claim 2 of Theorem~\ref{thm:Lav2}, we do not need here to introduce the parameter $\mu$ and its related properties formulated in Steps {\em iii), iv), v)}, whose validity depend on the extra assumptions (${\rm P}_{y,\Psi}$),  (${\rm L}_{y,\L}$) or on the fact that $\L$ is real valued.  We just sketch the proof, focusing on the slight differences.
 \begin{itemize}[leftmargin=*]
 \item We keep Steps {\em i)}, {\em ii)}, {\em i)}, {\em ii)}.
 \item We skip Steps {\em iii)}, {\em iv)}, {\em v)}. We set $\rho:=\overline\rho$ defined in Step {\em ii)} and $\Omega:=I$.
 \item We keep Step {\em vi)} and in Step {\em vii)} we choose $\nu$ in such a way that
 \[\dfrac{{\|y\|_1}}{\nu}\le  \dfrac{\varepsilon_*}{2(1+{\|y\|_{\infty}})};\]  we set $\Sigma_{\nu}:=\emptyset$.
 \item Step {\em viii)} now states that $S_{\nu}$ is negligible.
 \item[{\em ix$'$)}]
{\em The change of variable $\varphi_{\nu}$.}
We  introduce the following absolutely continuous change of variable $\varphi_{\nu}:I\to \R$ defined by
\[\varphi_{\nu}(t):=t,\quad\text{for a.e. }\tau\in I\quad \varphi_{\nu}'(\tau):=\begin{cases}\dfrac{|y'(\tau)|}{{\nu}} &\text{ if }\tau \in S_{\nu},\\
\,\,1 &\text{ otherwise}.
\end{cases}\]
Again $\varphi_{\nu}$ is  strictly increasing but now, since $\varphi_{\nu}'\ge 1$   on $I$,  $\varphi_{\nu}(I)$ is an interval containing $I$:  we denote by $\psi_{\nu}$ the restriction of the inverse of $\varphi_{\nu}$ to $I$: $\varphi_{\nu}$ is  absolutely continuous and  Lipschitz, moreover $\psi_{\nu}(I)\subset I$ and $|\psi_{\nu}(I)|\to |I|$ as $\nu\to +\infty$.
\item[{\em x$'$)}]
{\em Set, for all $s\in [t,T]$,
\begin{equation}\label{tag:yNchange}y_{\nu}(s):=y(\psi_{\nu}(s)).\end{equation}
Then  $y_{\nu}\in W^{1, p}([t,T]; \R^n)$ satisfies the boundary condition $y(t)=X$.}
Notice that
\begin{equation}\label{tag:ynuone}y_{\nu}'(s)=\begin{cases} \nu\dfrac{\,y'(\psi_{\nu}(s))}{|y'(\psi_{\nu}(s))|} &\text{ if }\psi_{\nu}(s) \in S_{\nu},\\
y'(\psi_{\nu}(s)) &\text{ otherwise.}
\end{cases}\end{equation}
  The new fact is that now $y_{\nu}(t)=y(\psi_{\nu}(t))=y(t)=X$ but $y_{\nu}(T)=y(\psi_{\nu}(T))=y(t')$ for some $t'\le T$, so that it may happen that $y_{\nu}(T)\not=y(T)$.
\item We now proceed as in the proof of Claim 2, without considering the estimate for $\mathcal J_{{\Sigma_{\nu}}}$ in Step {\em xv)} and of $\displaystyle\int_{\Sigma_{\nu}}*\,d\tau $ in Step {\em xvii)}. Since $\psi_{\nu}(I)\subset I$, we need a little more care in the estimates in the last steps, the change of variable being now $s=\varphi_{\nu}(\tau)$, with $\tau\in \psi_{\nu}(I)\subset I$.
 \item[{\em xv$'$)}]   Instead of  \eqref{tag:LvaluenewJ} we obtain
    \begin{equation}\int_{t}^T\L(s, y_{\nu}, y_{\nu}')\Psi(s, y_{\nu})\,ds=
\int_{\psi_{\nu}(I)}\left[\L\Big(\varphi_{\nu},   y, \dfrac{y'}{\varphi_{\nu}'}\Big)\varphi_{\nu}'\right]\Psi(\varphi_{\nu},  y)\,d\tau.\end{equation}
\item[{\em xvi$'$)}] Instead of \eqref{tag:Lqigdiuqgdf}, one gets
\begin{equation}
\int_{t}^T\!\L\big(\varphi_{\nu},  y,  y'\big)\Psi(\varphi_{\nu},  y)\,d\tau\le\!
\int_{\varphi_{\nu}(I)}\!\L\big(\tau, y,y'\big)\Psi(\tau, y)\,d\tau+\varepsilon_{\nu}M_{\Psi}\Theta+\dfrac{\eta}{2}.\end{equation}
\item[{\em xvii$'$)}]
Therefore
\eqref{tag:LfinalX} is now
\begin{equation}\begin{aligned}
F(y_{\nu})&\le \int_{\psi_{\nu}(I)}\L(s, y, y')\Psi(s,y)\,ds+\varepsilon_{\nu}M_{\Psi}\left(\Theta+\Xi^+ +\Upsilon^- \right)+\dfrac{\eta}2\\
&\le F(y)+\varepsilon_{\nu}M_{\Psi}\left(\Theta+\Xi^+ +\Upsilon^- \right)+\dfrac{\eta}2.
\end{aligned}\end{equation}
\item[{\em xviii$'$)}] Similar arguments apply to the proof of the convergence of $y_{\nu}'$ to $y$ in $L^1(I)$, taking into account that \eqref{tag:oqifhoqfhe} becomes
\begin{equation}\begin{aligned}
\int_t^T\left|y_{\nu}'-y'\right|^p\,ds&=\int_t^T\left|\dfrac{y'(\psi_{\nu})}{\varphi_{\nu}'(\psi_{\nu})}-y'  \right|^p\,ds\\
&=\int_{\psi_{\nu}(I)}\left|\dfrac{y'}{\varphi'_{\nu}}-y'(\varphi_{\nu}) \right|^p\varphi'_{\nu}\,d\tau\\
&=\int_{\psi_{\nu}(I)\setminus S_{\nu}}*\,d\tau+\int_{S_{\nu}}*\,d\tau,
\end{aligned}\end{equation}
where $*$ stands for $\left|\dfrac{y'}{\varphi'_{\nu}}-y'(\varphi_{\nu}) \right|^p\varphi'_{\nu}$.
\phantom{AAAAAAAAAAAAA}$\qed$
 \end{itemize}
\begin{remark} In the case of a final end-point constraint, instead of the initial one, Claim 1 of Theorem~\ref{thm:Lav2} may be obtained by slightly modifying Step {\em xi')} of the proof: indeed it is enough to define
\[\varphi_{\nu}(T):=T,\quad\text{for a.e. }\tau\in I\quad \varphi_{\nu}'(\tau)=\begin{cases}\dfrac{|y'(\tau)|}{{\nu}} &\text{ if }\tau \in S_{\nu},\\
\,\,1 &\text{ otherwise}.
\end{cases}\]
\end{remark}
\begin{remark}In the case of {\em one} end  point constraint, or if $\L$ is real valued, the proof of Theorem~\ref{thm:Lav2} is constructive. Indeed, in the first case, the approximating functions $y_{\nu}$ are defined by $y_{\nu}:=y\circ \psi_{\nu}$, where  $\varphi_{\nu}$ is defined in Step {\em ix$'$)} and depends just on $y'$ and the set $S_{\nu}:=\{s\in I: |y'(s)|>\nu\}$. For problems with both end point constraints and real valued Lagrangians, once chosen $\mu$ and $\Omega_{\mu}$ as in Step {\em v)}, it is enough to choose a subset $\Sigma_{nu}$ of $\Omega_{\mu}$ as in Step {\em vii)}, i.e., in such a way that
$(1-\mu)|\Sigma_{\nu}|=\displaystyle\int_{S_{\nu}}\dfrac{|y'(s)|}{\nu}\,ds$. One then defines the reparametrization $\varphi_{\nu}$ as in Step {\em ix)} and proceeds as above.  Some explicit approximating sequences qe built in   Example~\ref{ex:alberti} and Example~\ref{ex:maniabisbis}.
\end{remark}
\section{Examples}\label{sect:Examples}
\subsection{Autonomous case}
The next examples concern the autonomous case, i.e., $\L=\L(z,v)$ and $\Psi\equiv 1$.
Example~\ref{ex:alberti} below shows that Hypothesis (${\rm L}_{y,\L}$) is essential for the validity of Claim 2 in Theorem~\ref{thm:Lav2}, when $\L$ is extended valued. It
was formulated by G. Alberti (personal communication) for a different purpose.
\begin{example}[Occurrence of the Lavrentiev phenomenon in  an autonomous, convex  and l.s.c. problem with both endpoint constraints]\label{ex:alberti}
Let
 $y\in W^{1,1}([0,1];\R)$ be such that
\begin{itemize}
\item $y$ is of class $C^1$ in $[0,1[$, $y(0)=0, y(1)=1$;
\item $ y'>0$ on $[0, 1[$,
\item $y'(1):=\displaystyle\lim_{s\to 1^-}y'(s)=+\infty$.
    \end{itemize}
Such a function exists,  e.g., $y(s):=1-\sqrt{1-s}, s\in [0,1]$.
For every $z\in [0,1[$ set $q(z):=y'(y^{-1}(z))$. Let
\[\L(s, z,v):=\begin{cases} 0&\text{ if } z\in [0,1[\text{ and } v\le q(z),\\
+\infty&\text{ otherwise},\end{cases}\]
\begin{figure}[h!]
\begin{center}
\includegraphics[width=0.45\textwidth]{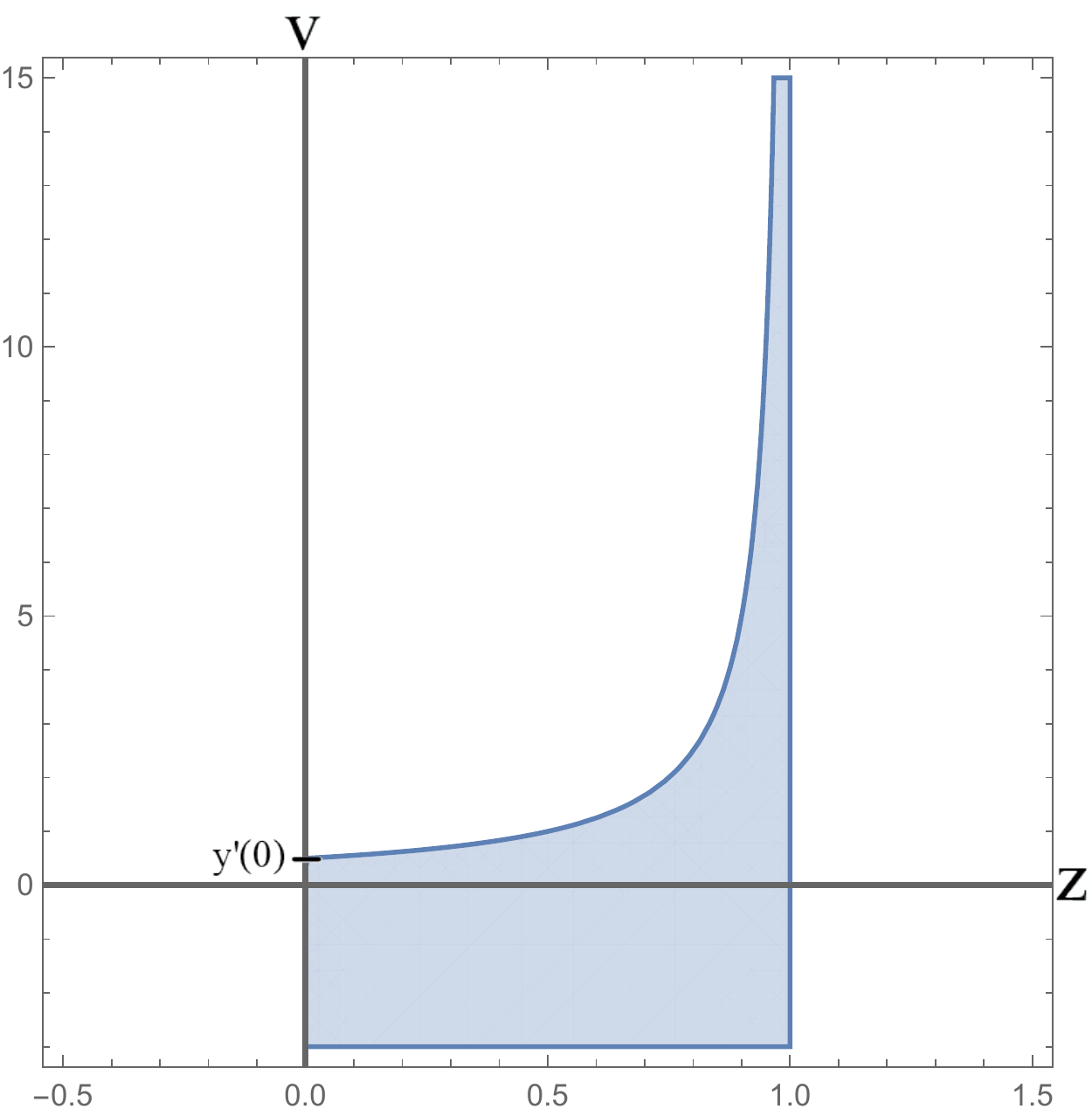}
\caption{{\small The domain of $\L(s, \cdot, \cdot)$ in Example~\ref{ex:alberti}}}
\label{fig:exalberti}\end{center}
\end{figure}
and set $F(z):=\displaystyle\int_0^1\L(s, z(s), z'(s))\,ds$ for every $z\in W^{1,1}([0,1];\R)$.
Notice that $\L$ is lower semicontinuous on $\R^2$ and $\L(z, \cdot)$ is convex for all $z\in\R$.
Clearly $F(y)=\min F=0$. We consider the following points.
\begin{itemize}[leftmargin=*]
\item[a)]
{\em Claim. $F(z)=+\infty$ for every Lipschitz $z:[0,1]\to\R$ satisfying $z(0)=0, z(1)=1$.}
 Indeed assume the contrary: let $z$ be such a function and suppose $F(z)<+\infty$.
Then
\begin{equation}\label{tag:q1}z'(s)\le q(z(s))\text{ a.e.  on } [0,1].\end{equation}
Notice that, since  $\displaystyle\lim_{s\to 1}q(z(s))=+\infty$ and $z'$ is bounded, then necessarily \eqref{tag:q1} is strict on a non negligible set.  It follows that
\[\int_0^1\dfrac{z'(s)}{q(z(s))}\,ds<\int_0^11\,ds=1.\]
However the change of variable $\zeta=z(s)$ (which is justified, for instance, by the chain rule \cite[Theorem 1.74]{MalyZiemer}), gives
\[\begin{aligned}\int_0^1\dfrac{z'(s)}{q(z(s))}\,ds&=\int_0^1\dfrac1{q(\zeta)}\,d\zeta\\
&=\int_0^1\dfrac{1}{y'(y^{-1}(\zeta))}\,d\zeta\\
&= \small{(\tau=y^{-1}(s))} \int_0^1\dfrac{y'(\tau)}{y'(\tau)}\,d\tau=1,
\end{aligned}\]
a contradiction, proving the claim.
\item[b)] {\em Check of the validity of the assumptions of Theorem~\ref{thm:Lav2}.}\\
The Lagrangian here is of the form $\L(z,z')\Psi(s,z)$ with $\Psi\equiv 1$.  Notice that $\Psi$  and $\L(s, y,z)$ satisfy the conditions for the validity of Claim 1  of Theorem~\ref{thm:Lav2} ($\L$ is bounded on its effective domain, in (${\rm B}'_{y,\Lambda}$) take $0<\lambda\le y'(0)$ and is autonomous) and $\Psi$ satisfies the additional Condition (${\rm P}_{y, \Psi}$) of Claim 2.
However $\L$ takes the value $+\infty$ and Condition (${\rm L}_{y, \L}$) {\em is not} fulfilled w.r.t. $\dist_u$, and thus w.r.t.  any other distance-like function (see Remark~\ref{rem:dist2}).
\item[c)] {\em Claim: there is no Lavrentiev phenomenon for $F$ with the end-point constraint $z(1)=1$.}
The validity of Claim 1 of Theorem~\ref{thm:Lav2} implies the non-occurrence of the Lavrentiev phenomenon for the associated variational problems with just one end-point constraint, either $z(0)=y(0)=0$, or $z(1)=y(1)=1$ (but not both!). We point out that this conclusion could not be obtained as a consequence of \cite[Theorem 3.1]{CM5}, since Hypothesis (${\rm B}_{y,\L}$) is violated here: indeed $\L(z,v)$ takes the value $+\infty$  if $z\notin [0,1]$.
\item[d)] {\em Construction of a family of Lipschitz approximating competitors with the end point constraint $z(1)=1$.}
We illustrate here the construction of the ``almost better'' Lipschitz competitor $\overline y=y_{\nu}$ that is carried on in the proof of Theorem~\ref{thm:Lav2} for the problem with final constraint $z(1)=1$.
We assume for simplicity that $y'$ is strictly increasing, as is the case of $y(s):=1-\sqrt{1-s}$.
Let $\nu$ be big enough and let $t_{\nu}\in ]0, 1[$ is such that $y'(t_{\nu})=\nu$; following Step {\em ix$'$)}  of the proof of Theorem~\ref{thm:Lav2},
 the change of variable  $\varphi_{\nu}:[0,1]\to \R$ is defined by
\[\varphi_{\nu}(1):=1,\quad\text{for a.e. }\tau\in [0,1]\quad \varphi_{\nu}'(\tau):=\begin{cases}\dfrac{y'(\tau)}{{\nu}} &\text{ if }\tau\in [t_{\nu}, 1],\\
\,\,1 &\text{ otherwise}.
\end{cases}\]
Therefore we have
\[\varphi_{\nu}(\tau):=\begin{cases}1+\dfrac{y(\tau)-1}{\nu} &\text{ if }\tau\in [t_{\nu}, 1],\\
\,\,\varphi_{\nu}(t_{\nu})+\tau-t_{\nu} &\text{ otherwise},
\end{cases}\]
with $\varphi(t_{\nu})=1+\dfrac{y(t_{\nu})-1}{\nu}$. Notice that $\varphi_{\nu}(0)=\varphi_{\nu}(t_{\nu})-t_{\nu}\le 0$ since $\varphi_{\nu}'\ge 1$ on $[t_{\nu},1]$.
Let $\tau_{\nu}\in [0,1]$ be such that $\varphi_{\nu}(\tau_{\nu})=0$, namely $\tau_{\nu}=t_{\nu}-\varphi_{\nu}(t_{\nu})$. The inverse $\psi_{\nu}$ of $\varphi_{\nu}$, restricted to $[0, 1]$ is thus defined by
\[\psi_{\nu}(s):=\begin{cases}y^{-1}(1-(1-s)\nu) &\text{ if }s\in [\varphi_{\nu}(t_{\nu}), 1],\\
s+t_{\nu}-\varphi_{\nu}(t_{\nu}) &\text{ if } s\in [0, \varphi_{\nu}(t_{\nu})].
\end{cases}\]
Then $\varphi_{\nu}([0,1])=[\tau_{\nu},1]$. The function $y_{\nu}=y\circ\psi_{\nu}$ is thus defined as
\[y_{\nu}(s):=\begin{cases}1+s\nu-\nu &\text{ if }s\in [\varphi_{\nu}(t_{\nu}), 1],\\
y(s+t_{\nu}-\varphi_{\nu}(t_{\nu})) &\text{ if } s\in [0, \varphi_{\nu}(t_{\nu})].
\end{cases}\]
Figure 3 depicts the graphs of some of these approximations for some values of $\nu$ and $y(s):=1-\sqrt{1-s}$.
\begin{figure}[h!]
\begin{center}
\includegraphics[width=0.6\textwidth]{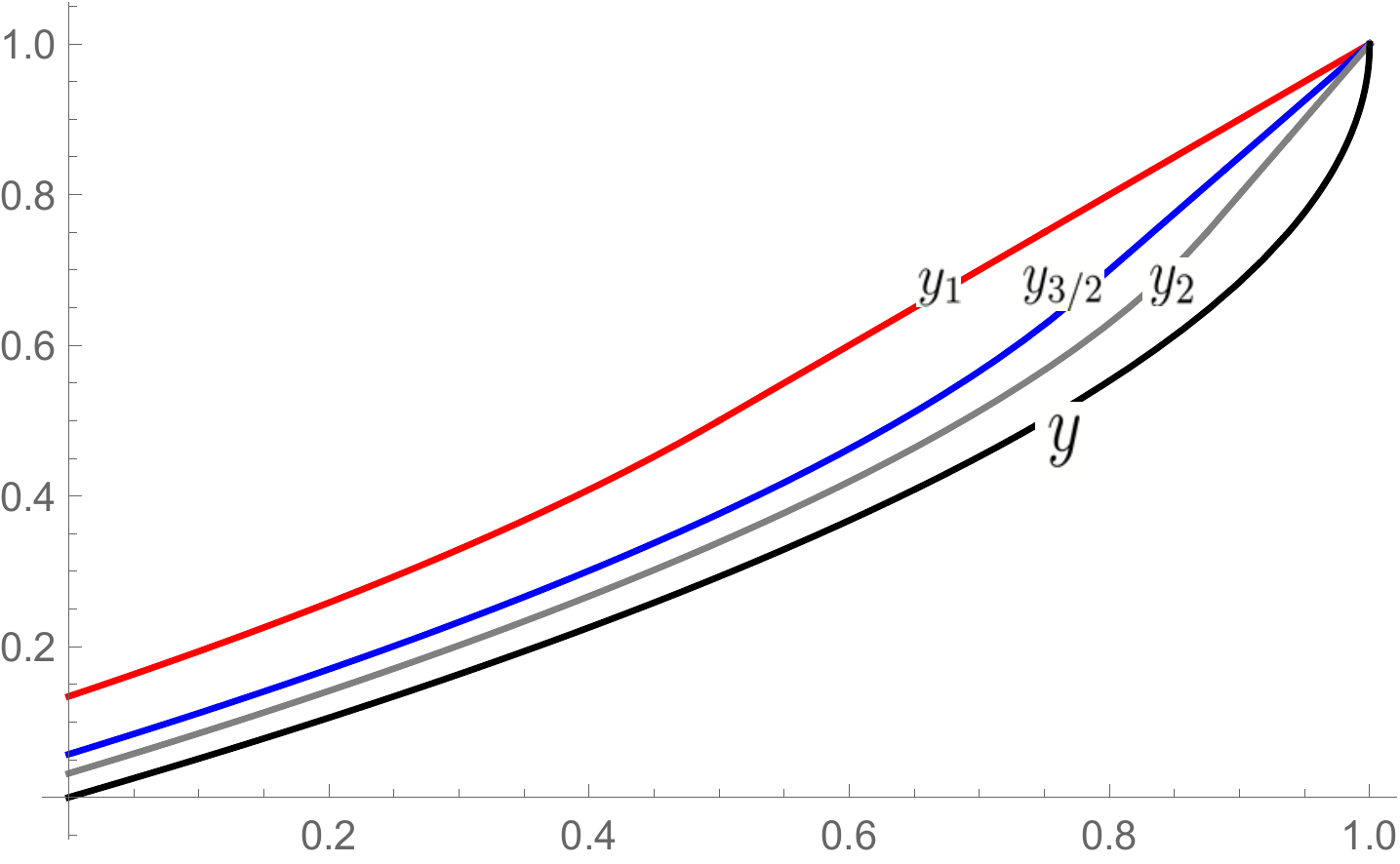}
\caption{{\small The absolutely continuous function $y(s):=1-\sqrt{1-s}$ (below) and some of its Lipschitz approximations (from above: $y_1, y_{3/2}, y_2$), following the recipe of the proof of Theorem~\ref{thm:Lav2}.
}}\label{fig:approx}
\end{center}
\end{figure}
\end{itemize}
\end{example}
In the next example we show how the constructive method in the proof of Theorem~\ref{thm:Lav2} may lead to a sequence $(y_{\nu})_{\nu}$ that converges to $y$ in $W^{1,1}$ but such that $\displaystyle\limsup_{\nu}F(y_{\nu})>F(y)$.
\begin{example}\label{ex:maniabisbis} Consider Manià's example \ref{ex:Mania1}, where the Lagrangian  $L(s,y,u)=(y^3-s)^2u^6$ satisfies the assumptions for the validity of Claim 1 of Theorem~\ref{thm:Lav2}, with $y(s):=s^{1/3}$, except the fact that neither $\inf\Psi(s,y(s))>0$ nor $\L(s, y(s), y'(s))=(y'(s))^6=\dfrac1{3^6}s^{-4}\in L^1([0,1])$. We follow the path of the proof of Theorem~\ref{thm:Lav2} in order to build a Lipschitz sequence $(y_{\nu})_{\nu}$, with prescribed initial datum $y_{\nu}(0)=0$. We will then study the sequence $F(y_{\nu})_{\nu}$.
Fix $\nu\ge 2$. Following Step {\em ix$'$)} we define
\[\varphi_{\nu}(0)=0,\quad \forall\tau\in [0,1]\,\quad\varphi_{\nu}'(\tau):=\begin{cases}\dfrac{|y'(\tau)|}{{\nu}} &\text{ if }y'(\tau)>\nu,\\
\,\,1 &\text{ otherwise}.
\end{cases}\] Thus we obtain
\[\forall \tau\in [0,1]\quad \varphi_{\nu}(\tau):=\begin{cases}\dfrac{\tau^{1/3}}{{\nu}} &\text{ if }0\le \tau<\tau_0(\nu):=\left(\dfrac1{3\nu}\right)^{3/2}\!\!\!\!\!\!\!,\\
\tau+\dfrac{\tau_0(\nu)^{1/3}}{\nu}-\tau_0(\nu) &\text{ if }\tau_0(\nu)\le \tau\le 1.\end{cases}\]
Notice that, as expected,
\[\varphi_{\nu}(1)=1+\dfrac{\tau_0(\nu)^{1/3}}{\nu}-\tau_0(\nu)=
1+\dfrac1{\nu^{3/2}}\left(\dfrac1{\sqrt3}-\dfrac13\right)>1.\]
The inverse $\psi_{\nu}$ of $\varphi_{\nu}$, restricted to $[0,1]$ is thus defined for all $s\in [0,1]$ by
\[\psi_{\nu}(s):=\begin{cases}(\nu s)^3 &\text{ if }0\le s<s_0(\nu):=\varphi_{\nu}(\tau_0(\nu))=\dfrac1{\sqrt 3\nu^{3/2}},\\
 s-\dfrac2{3\sqrt 3\nu^{3/2}}&\text{ if }s_0(\nu)\le s\le 1.\end{cases}\]
Following Step {\em x$'$)} we therefore define, for all $s\in [0,1]$,
\[y_{\nu}(s):=\begin{cases}\nu s &\text{ if }0\le s<s_0(\nu)=\dfrac1{\sqrt {3}\nu^{3/2}},\\
 \left(s-\dfrac2{3\sqrt 3\nu^{3/2}}\right)^{1/3}&\text{ if }s_0(\nu)\le s\le 1.\end{cases}\]
 Notice that of course $y_{\nu}(0)=0$, however $y_{\nu}(1)=\left(1-\dfrac2{3\sqrt 3\nu^{3/2}}\right)^{1/3}<1$, as expected.
The proof of Theorem~\ref{thm:Lav2} ensures that $y_{\nu}$ is Lipschitz for big values of $\nu$ and $(y_{\nu})_{\nu}$  converges to $y$ both in $W^{1,1}$ and in energy. Let us check these facts directly.
\begin{itemize}
\item {\em Lipschitzianity.}
For all $s\in [0,1]$ we have
\[y_{\nu}'(s):=\begin{cases}\nu  \phantom{AAA}\text{ if }0\le s<s_0(\nu)=\dfrac1{\sqrt 3\nu^{3/2}},\\
 y'\left(s-\dfrac2{3\sqrt 3\nu^{3/2}}\right)=\dfrac13\left(s-\dfrac2{3\sqrt 3\nu^{3/2}}\right)^{-2/3}\text{ if }s_0(\nu)\le s\le 1.\end{cases}\]
Now, if $s\ge s_0(\nu)$,
$s-\dfrac2{3\sqrt 3\nu^{3/2}}\ge \dfrac1{3\sqrt 3\nu^{3/2}}$, whence
$\|y_{\nu}'\|_{\infty}\le \nu$ on $[0,1]$.
\item {\em Convergence in $W^{1,1}$.}
We have
\[\begin{aligned}
\|y_{\nu}'-y'\|_1&= \int_0^{s_0(\nu)}|y_{\nu}'-y'|\,ds+\int_{s_0(\nu)}^1|y_{\nu}'-y'|\,ds\\
&\le y_{\nu}(s_0(\nu))+y(s_0(\nu))+\int_{s_0(\nu)}^1|y_{\nu}'-y'|\,ds.
\end{aligned}\]
Now,
\[y_{\nu}(s_0(\nu))+y(s_0(\nu))=\dfrac1{\sqrt{3\nu}}+\dfrac1{3^{1/6}\sqrt{\nu}}\to 0,\]
and, since $y_{\nu}'(s)=y'\left(s-\dfrac2{3\sqrt 3\nu^{3/2}}\right)$ on $[s_0(\nu), 1]$ and $y$ is concave,
\[\begin{aligned}\int_{s_0(\nu)}^1|y_{\nu}'-y'|\,ds&=\int_{s_0(\nu)}^1(y_{\nu}'-y')\,ds\\
&=(y_{\nu}(1)-y(1))-(y_{\nu}(s_0(\nu))-y(s_0(\nu))\\
&=\left(\left(1-\dfrac2{3\sqrt 3\nu^{3/2}}\right)^{1/3}-1\right)-\left(\dfrac1{\sqrt{3\nu}}-\dfrac1{3^{1/6}\sqrt{\nu}}\right)
\end{aligned}\]
tends to 0 as $\nu\to +\infty$.
\item {\em Failure of the convergence in energy.}
Since $\L(s, y,y')\notin L^1([0,1])$ this part of the proof of Theorem~\ref{thm:Lav2} is not justified. Indeed, we have
\[F(y_{\nu})\ge \int_0^1(y_{\nu}^3(s)-s)^2(y_{\nu}'(s))^6\,ds=\frac{68 \nu^{3/2}}{945 \sqrt{3}}\to +\infty\]
as $\nu\to +\infty$.
\end{itemize}\end{example}
\section{Further developments and questions}
\begin{remark}\phantom{AAA}
\begin{enumerate}
\item The proof of Theorem~\ref{thm:Lav2} relies on the fact that, for some $\lambda>\dfrac{\|y\|_1}{T-t}$, $\rho>0$ and $P(s, z, v)\in\partial_r\L(s, z, rv)_{r=1}$,
\begin{equation}\label{tag:MMinfREM}
-\infty<\!\!\!\!\!\!\displaystyle\inf_{\substack{s\in I,z\in y(I), |v|<\lambda\\ (s,z,v)\in \Dom(\L)\\ \dist((s,z,v),(\Dom{\L})^c)\ge\rho}}
P(s,z,v),
\end{equation}
 and there is  $\nu_0>0$ such that
\begin{equation}\label{tag:MMsupREM}
\sup_{\substack{s\in I, z\in y(I), |v|\ge \nu_0\\ (s,z,v)\in \Dom(\L)}}
P(s,z,v)<+\infty.
\end{equation}
This property is called Growth Condition (M$_B^t$) in \cite{MTrans}, and is compared in the quoted paper to other growth conditions.
In view of Lemma~\ref{lemma:M}, Hypotheses (${\rm B}^{w}_{y,\L}$) and (${\rm B}'_{y,\L}$) provide a sufficient condition for the validity of \eqref{tag:MMinfREM} -- \eqref{tag:MMsupREM}.   Can they be weakened?
\item Following the proof of \cite[Theorem 5.1]{MTrans}, it appears that the conclusion of Theorem~\ref{thm:Lav2} is still valid by replacing the radial convexity condition (A$_c$) on the last variable of $\L$ with the existence, at any point $(s,z,v)\in\Dom(\L)$, of the partial derivative $D_v\L(s,z,v)$ of $\L$ with respect to $v$.  In this case one has to replace the selection $P(s,z,v)$ of $\partial_r\L(s,z, rv)_{r=1}$ with $L(s,z,v)-D_v\L(s,z,v)$ (which equals $L(s,z,v)-v\cdot \nabla_v\L(s,z,v)$  if $v\mapsto\L(s,z,v)$ is of class $C^1$). In this framework, however, \eqref{tag:MMinfREM} -- \eqref{tag:MMsupREM} do not follow  for free as in the convex case:  one has to add some further  regularity conditions on $\L$ w.r.t. the last variable, e.g., that $v\mapsto \L(s,z,v)$ is uniformly Lipschitz for $(s,z,v)$ in bounded sets that are well-inside the domain (see \cite[Proposition 4.17]{MTrans}). Are there some other sufficient conditions, other than radial convexity or differentiability, that guarantee the validity of Condition (M$_B^t$) in some suitable form?
\item The conclusions of the paper may be easily extended to Lagrangians of the form $\displaystyle\sum_{i=1}^M\L_i(s,z,v)\Psi_i(s,z)$, assuming that each pair $(\Psi_i, \L_i)$ satisfies the assumptions for $(\Psi, \L)$, and, as in \cite{MTrans}, for optimal control problems with controlled-linear dynamics of the form $z'=b(z)v$ under some suitable assumption on the function $b$. The non-occurrence of the gap at an admissible pair $(y,u)$ means here that the energy at $(y,u)$ may be approximated via the energy of a sequence of admissible pairs $(y_h, u_h)_h$ where each $u_h$ is bounded.  Both of these extensions will be thoroughly described in a forthcoming paper \cite{CM6} devoted to higher order variational problems.
    \end{enumerate}
\end{remark}
\section*{Acknowledgments}
I warmly thank Giovanni Alberti for the mail exchange we had during the preparation of the paper and for providing  Example~\ref{ex:alberti}, though for a different original purpose.
I am also grateful to Giulia Treu for her comments on the manuscript and encouragement.
This research is partially supported by the  Padua University grant SID 2018 ``Controllability, stabilizability and infimum gaps for control systems'', prot. BIRD 187147 and has been accomplished within the UMI Group TAA ``Approximation Theory and Applications''.
\bibliographystyle{plain}
\bibliography{Lavrentiev1dim2}
\end{document}